\newtheorem{theorem}{Theorem}
\newtheorem{lemma}{Lemma}
\newtheorem{assumption}{Assumption}
\newtheorem{proposition}{Proposition}
\DeclareMathOperator*{\argmax}{arg\,max}
\begin{document}

\title[Article Title]{Global and Robust Optimization for Non-Convex Quadratic Programs}

\author[]{\fnm{Asimina} \sur{Marousi}}

\author*[]{\fnm{Vassilis} \sur{M. Charitopoulos}}\email{v.charitopoulos@ucl.ac.uk}

\affil[]{\orgdiv{Department of Chemical Engineering, Sargent Centre for Process Systems Engineering}, \orgname{University College London}, \orgaddress{\street{Torrington Place}, \city{London}, \postcode{WC1E 7JE},  \country{UK}}}


\abstract{This paper presents a novel algorithm integrating global and robust optimization methods to solve continuous non-convex quadratic problems under convex uncertainty sets. The proposed Robust spatial branch-and-bound (RsBB) algorithm combines the principles of spatial branch-and-bound (sBB) with robust cutting planes. We apply the RsBB algorithm to quadratically constrained quadratic programming (QCQP)  problems, utilizing McCormick envelopes to obtain convex lower bounds. The performance of the RsBB algorithm is compared with state-of-the-art methods that rely on global solvers. As computational test bed for our proposed approach we focus on pooling problems under different types and sizes of uncertainty sets. The findings of our work highlight the efficiency of the RsBB algorithm in terms of computational time and optimality convergence and provide insights to the advantages of combining robustness and optimality search.}

\keywords{Robust optimization, Spatial branch-and-bound, Pooling problems, Global Optimization, Cutting planes, Semi-infinite Programming}



\maketitle

\section{Introduction}\label{sec1}

One of the fundamental assumptions when constructing mathematical programming models is the degree of uncertainty in the input parameter data. If uncertainty is neglected, then a deterministic optimization problem is generated, while for the case where uncertainty is considered to some degree, there are various methods to model uncertainty. There are two key drivers for the selection of the appropriate optimization method: statistical data availability and desired degree of robustness. Robust optimization has emerged as a leading method for problems with limited uncertain parameter data and requirements for a risk-averse solution. In a robust optimization context, solutions that lead to attainable decisions under all parameter realizations are referred to as robust feasible, while solutions that also leads to the best objective value are called robust optimal. A general form of the robust optimization problem is given by \eqref{eq:GRO}. 

\begin{equation}\tag{$P_{rob}$}\label{eq:GRO}
\begin{aligned}
&\underset{\bm x\in \mathcal{X}}{\min}~f(\bm x)\\
&\text{s.t.}~ g_i(\bm x,\bm u)\leq 0,~\forall \bm u \in \mathcal{U},\quad i=1,\cdots,m
\end{aligned}
\end{equation}

where $\bm x\in \mathcal{X}\subset \mathbb{R}^{n_x}$ denotes continuous decisions, and $u \in \mathcal{U} \subset \mathbb{R}^{n_u} $ is the vector of uncertain parameters that resides within an uncertainty set $\mathcal{U}$, $f:\mathcal{X}\rightarrow\mathbb{R},~ g_i:\mathcal{X}\rightarrow\mathbb{R}$. Depending on the properties of $f$ and $g_i$ with respect to the optimization variables $\bm x$, of the  uncertainty set $\mathcal{U}$ and $g_i$ with respect to the uncertain parameters $\bm u$ either reformulation or sampling approaches can be employed for the solution of \eqref{eq:GRO}.

State-of-the-art robust optimization algorithms for non-convex problems typically rely on global solvers to obtain robust optimal solutions. The prevailing approach involves deriving the dual reformulation and utilizing a global solver, either directly or adaptively. However, the dual reformulation can increase the problem complexity hence impeding the convergence of the global solver. In the case of challenging non-convex problems convergence to a global solution could not be achieved within a predefined optimality tolerance. Nevertheless, the obtained local solution would be feasible for the robust problem. An alternative approach is based on an iterative robust cutting plane algorithm. In this case, significant computational time can be spent searching for a global solution that may be deemed robust infeasible. 

\textbf{Contributions of this work:} Our research hypothesis is that integrated exploration of global and robust optimality can yield computational benefits. To this end, in this article we introduce a novel algorithm that conducts a concurrent global optimality and robustness search for continuous non-convex problems. The proposed approach integrates spatial branch-and-bound with robust cutting plane notions. The key idea is that, while exploring the branch-and-bound tree, we assess the robustness of nodes entailing the best-found solutions. We illustrate the performance and benefits of our proposed approach through benchmark Quadratically Constrained Quadratic Programs (QCQPs) of pooling problems. At each node, the non-convex problem is solved via a local solver. If the computed solution is as good as the best-found so far, an infeasibility test is performed to evaluate the robustness of the obtained solution. If not, then the corresponding cutting planes are added both to the original non-convex and relaxed convex problems. The algorithm proceeds to the next step once no more violations are detected. The convex problem is solved next, and the solution of this problem is used to decide the most promising variable for branching into two child nodes. With the use of appropriate fathoming criteria, the tree nodes are exhausted, and a robust optimal solution is obtained. 

Section \ref{LR} provides a literature review on state-of-the-art nonlinear robust optimization and semi-infinite programming (SIP) methods and their applications, along with a brief overview of the pooling problem which serves as our case study. The outline of the robust cutting planes algorithm and the details for proposed Robust spatial branch-and-bound algorithm are introduced in Section \ref{meth}. In Section \ref{prob}, we present the pooling problem under uncertainty, and derive the corresponding formulations for the dual reformulation and RsBB solution methods. In Section \ref{results}, we evaluate the performance of the RsBB algorithm compared to the state-of-the-art for benchmark pooling problems under box, ellipsoidal and polyhedral sets of varying sizes.

\section{Literature review}\label{LR}
Robust optimization (RO) is developing into one of the prevailing risk-averse methods to study problems under uncertainty in the optimization literature \citep{Zhang2022,Gabrel2014}. It is largely preferred for applications where there is no or limited statistical data for the uncertain parameters, or when dealing with hard feasibility constraints. In RO, the modeler must decide an appropriate, in size and shape, uncertainty set to characterize the uncertain parameters. The RO algorithms provide an optimal solution that corresponds to the worst-case uncertainty realization and at the same time guarantee that the solution is feasible given any parameter value from the selected uncertainty set. The main challenge of RO is to find the appropriate way to transform the uncertain optimization problem into a tractable deterministic formulation. The most prevailing methods in that direction, are dual reformulation \citep{Bental98, Gorissesn2015,Li2011} and robust cutting sets \citep{Mutapcic2009,Isenberg2021}. The first comparison of the two methods can be traced to \citet{Fischetti2012} who applied both methods in linear and mixed integer problems. Based on the computational experiments, cutting planes were found to be more suitable for the linear programs, while the dual reformulation performed better for the mixed integer programs. \citet{Bertsimas2016} quantified the performance of the two methods for different variants of the cutting planes, as well as that of a hybrid approach. In contrast to previous results in the literature, for the ellipsoidal uncertainty sets, the dual reformulation dominated in the linear programs and cutting planes for mixed-integer instances. For polyhedral uncertainty sets neither method was deemed to be superior.

By definition, robust optimization problems belong to the general class of SIP\citep{Stein2012}. The solution approaches for SIP problems can be divided into two general categories: discretization and overestimation methods. The discretization method as initially proposed by \citet{Blankenship1976} performed a discretization of the infinite variable set while iteratively solving the upper and lower-level problems  similar to the robust cutting set algorithm. For continuous functions and compact host sets, the sequence of upper-level solution points approaches a solution of the SIP at the limit.  To address the problem of possible infeasibility in the upper-level problem (ULP), \citet{Mitsos2011} proposed an iterative global optimization method that involved constructing a decreasing sequence of right-hand side constraint relaxations. The feasibility of the obtained solution was evaluated solving the respective lower-level problem (LLP). The proposed algorithm was proven to finitely terminate in an $\epsilon-$optimal solution.  This method was later generalized to address general SIP problems \cite{Mitsos2015}. Apart from the feasibility based discretization methods, the reduction-based adaptations rely on locally solving the ULP to acquire approximate solutions which are updated by the solutions of the LLP which are solved by a global solver\citep{LopezMarco2007,Seidel2022}. Overestimation methods entail interval analysis \citep{Bhattacharjee2005,Bhattacharjee2005math, Mitsos2008} and relaxation methods \citep{Floudas2007,Stein2012adaptive} to generate convergent sequences of upper and lower bounds to retrieve  optimal solutions for SIP problems.

The study of non-linear problems by the RO\citep{Leyffer2020, hung2024solution} and SIP \citep{Djelassi2021} literature has been largely developed interdependently despite their commonalities. For nonlinear convex robust problems under convex uncertainty sets both dual reformulation and robust cutting set methods can be applied. \citet{Diehl2005} highlight the computational challenge in solving nonlinear robust optimization problems under general norms and propose an approximate reformulation that employs a linearization of the uncertainty set. For nonlinear convex problems convex analysis may also be applied to retrieve tractable reformulations for specific classes of uncertainty sets \citep{BentalNRO, Gorissesn2015}. \citet{Zhang2007} presented a rigorous method to choose safety margins for uncertain parameters in nonlinear and non-convex problems via local linearization around a nominal uncertainty value. Instead of using the nominal uncertainty point, \citet{Li2018nro} proposed an algorithm based on outer approximation over sampled uncertainty points and tested their methodology in non-convex process systems engineering problems. \citet{Bertsimas2009} performed a neighborhood search using a robust local descent method to avoid locally infeasible regions. Their methodology obtained local robust solutions for non-convex and simulation-based problems. \citet{Houska2012} used a sequential bilevel approach for non-convex min-max problems resulting in less conservative solutions compared to the state-of-the-art.

In various applications, finding a local robust solution may not suffice, necessitating thus the need for coupling global and robust optimization. Polynomial max-min and min-max problems are represented as general semi-definite programming(SDP) problems and their corresponding robust optimal values can be approximated solving a hierarchy of SDP relaxations \citep{Lasserre2006, Lasserre2011}. \citet{Li2011_gro} studied the scheduling of crude oil operations under demand uncertainty. The authors derived the deterministic robust counterpart of the uncertain constraints using a tailored branch-and-bound (BB) algorithm. \citet{Wiebe2019} addressed the pooling problem under uncertainty comparing dual reformulation and cutting plane methods using a global solver. For the cutting planes method they relied on using a global solver and gradually decreasing the solver tolerance. Cutting planes outperformed the dual reformulation approach in terms of \% of instances solved within an imposed time limit, since the latter increased the problem complexity for ellipsoidal and polyhedral uncertainty sets.  \citet{Isenberg2021} extending the work of \citet{Mutapcic2009}, and proposed a robust cutting set algorithm being able to certify robust solutions for non-convex problems entailing a large number of equality constraints. The quality of the final solution, i.e. being robust feasible or robust optimal, is solver dependent. On the same year, \citet{Zhang2021} employed an enhanced normalized multi-parametric disaggregation technique (ENMDT) and optimality-based bound tightening to solve the problem of refinery-wide  planning operations under uncertainties. \citet{Carrizosa2021} introduced a BB algorithm with interval arithmetic to solve benchmark unconstrained non-convex optimization problems of low-dimensionality under robust uncertainty. Most robust optimization methods require the convexity of the uncertainty set or the lower-level problem, however \citet{Kuchlbauer2022} proposed an adaptive bundle method for MINLP problems under robust uncertainty that allows the lower-level problem to be non-convex.

We derive our QCQP case study from the process systems engineering literature as the planning of pooling problems. The pooling problem  belongs to the general class of non-convex quadratic problems and is proven to be NP-hard even for a small number of variables \citep{Alfaki2013}. Historically, the most common solution approach to address non-convex NLPs, such as the pooling problem, is spatial branch-and-bound (sBB) \citep{Tuy1998}.  In sBB, in contrast to classic branch-and-bound, branching is performed on continuous variables. The original non-convex problem constitutes the upper bound and the equivalent relaxed linear programming (LP) problem the lower bound. \citet{Foulds1992} used convex relaxations to approximate the bilinear terms within a branch-and-bound framework. \citet{Quesada1995} integrated the McCormick envelopes \citep{McCormick1976} with  Reformulation Linearization Technique (RLT) constraints \citep{Sherali1992} in an sBB framework selecting the branching variable as the one with the worst-approximation error. Using RLT constraints result in tightening the convex relaxations of the bilinear terms without affecting the feasible region.  Later on, \citet{Alfaki2013} used both convex and concave envelopes following a novel branching strategy.  For high-dimensional problems, the number of convex relaxation constraints increases drastically increasing the complexity of the problem. To that end, \citet{Audet2004} proposed a branch-and-cut algorithm where only the violating linearization constraints are added as cuts, instead of all convex and concave envelopes. Every generated cut is applied to all tree nodes. \citet{Gounaris2009}, replaced the non-convex pooling problem  with a piecewise linear relaxation resulting in an MILP problem.  \citet{Dey2015} studied the impact of using piecewise linearization for the linear envelopes, they also introduced pooling benchmark problems of increased dimensionality and complexity. \citet{Ceccon2022} introduced the open-source MIQCQP solver GALINI that is based on a branch-and-cut algorithm using convex linear relaxations and mixed-integer restrictions for the pooling problems. The performance of the MILP pooling formulation can be improved using appropriate tightening methods and reduce computational time \cite{Chen2024}. In this work following the results on the pooling problem literature, we employ an sBB approach under McCormick relaxations as the selected global optimization methodology as it enables the integration of of robust cutting planes.

\section{Methods}\label{meth}
The proposed Robust spatial branch-and-bound (RsBB) integrates the robust cutting set algorithm within a spatial branch-and-bound global optimization algorithm in search of an optimal robust solution. In this section, the robust cutting set algorithm \citep{Mutapcic2009} will be outlined with a theoretical study on the convergence of the integrated robust cutting planes and BB algorithms. In the following subsection, the proposed RsBB algorithm is introduced. The assumptions for our methodology are the following.
\begin{assumption}\label{as:bounded}
The domain of $\bm x$, $\mathcal{X}\subset \mathbb{R}^{n_x}$ is compact such that $\mathcal{X}=\{\bm x |~  \underline{\bm x}\leq \bm x \leq  \overline{ \bm x}\}$. 
\end{assumption}
Assumption \ref{as:bounded} ensures compactness of the feasible set which is a requirement for the finite convergence of the BB methods \citep{Belotti2013}.

\begin{assumption}\label{as:lips}
    The objective and constraint functions $f,~g_i: \mathcal{X}\rightarrow \mathbb{R}$ are bilinear or linear on $\bm x$.
\end{assumption}

Assumption \ref{as:lips} ensures the differentiability and continuity of the functions. In combination with Assumption \ref{as:bounded} Lipschitz continuity of the problem function is inferred\citep{Stein2024} which is a requirement for the finite convergence of the robust cutting set algorithm \citep{Mutapcic2009}. Moreover, this assumption supports the satisfaction of standard regularity conditions that underlie the convergence of local nonlinear solvers.

\begin{assumption}\label{as:onu}
   The uncertainty set $\mathcal{U}$ is convex and generates a non-empty robust feasible region.
\end{assumption}

If Assumption \ref{as:onu} does not hold, and $\mathcal{U}$ generates an empty feasible region, then the robust problem is infeasible hence no decision $\bm x \in \mathcal{X}$ can satisfy the uncertainty constraints for all the uncertain parameters $\bm u \in \mathcal{U}$ \citep{Bental09,Bertsimas04}.

\begin{assumption}\label{as:gaff}
    The uncertain constraints $g_i$ are affine in $\bm u$.
\end{assumption}

Assumptions \ref{as:onu} and \ref{as:gaff} guarantee that the solution of the LLP problems is exact hence the robust cutting plane algorithm terminates in a finite number of iterations.

\subsection{Robust cutting set algorithm}
Consider the deterministic problem \eqref{eq:nom} for the nominal uncertainty value $\bm u_{nom}$.

\begin{equation}\tag{$P_{nom}$}\label{eq:nom}
\begin{aligned}
&\underset{\bm x\in \mathcal{X}}{\min}~f(\bm x)\\
&\text{s.t. }~ g_i(\bm x,\bm u_{nom})\leq 0, \quad i=1,\cdots,m
\end{aligned}
\end{equation}

Following \citet{Mutapcic2009}, there exists a subset $ \hat{\mathcal{U}_i}=\{ u_{i,0},\cdots, u_{i,K_i}\}\subseteq \mathcal{U},~ i=1,\cdots,m$ involving $K_i$ finite uncertainty samples or realizations for each uncertain constraint and the corresponding problem \ref{eq:samp}.   
\begin{equation}\tag{$P_{samp}$}\label{eq:samp}
\begin{aligned}
&\underset{\bm x\in \mathcal{X}}{\min}~f(\bm x)\\
&\text{s.t.}~ g_i(\bm x,\bm u)\leq 0,~ \bm u \in \hat{\mathcal{U}_i}, \quad i=1,\cdots,m
\end{aligned}
\end{equation}

Let $\bm u_{nom}=\bm u_0$, the following uncertainty sample can be determined via solving the upper-level minimization problem (ULP) \ref{eq:nom}, and evaluating the robustness of the solution $\bm x^*_{nom}$ in the lower-level maximization problem (LLP) \eqref{eq:maxg} $\forall i$.
\begin{equation}\tag{$T_i$}\label{eq:maxg}
\begin{aligned}
&\underset{\bm u}{\max}~g_i(\bm x^*, \bm u)\\
&\text{s.t. }\,\bm u\in \mathcal{U}
\end{aligned}
\end{equation}
The robust cutting set Algorithm \ref{alg:RCP} terminates in a robust optimal solution after a finite number of iterations $K$ given that the exact solution of problem \eqref{eq:maxg} can be obtained. The exactness of the solution of problems \eqref{eq:maxg} can be guaranteed for $g_i$ and $\mathcal{U}$ satisfying any of the following properties \citep{Mutapcic2009}:
\begin{itemize}
    \item $\mathcal{U}$ is a finite set.
    \item $g_i$ is concave in $\bm u$, for each $\bm x$, and $\mathcal{U}$ is a polyhedron defined as the convex hull of a finite set of modest size.
    \item $g_i$ is monotone for each $\bm u \in \mathcal{U}$ and for each $\bm x$, and $\mathcal{U}$ is a box set around the nominal point $\bm u_0$.
    \item $g_i$ is affine in $\bm u$ and $\mathcal{U}$ is an ellipsoid around the nominal point $\bm u_0$.
        \item $g_i$ is affine in $\bm u$ and $\mathcal{U}$ is a polyhedron around the nominal point $\bm u_0$.
\end{itemize}

If $g_i$ is quadratic and $\mathcal{U}$ is defined by a quadratic function, an S-procedure can be employed satisfying the exact solution of the LLP \citep{Mutapcic2009}. For the cases where the above conditions are not satisfied, approximate solution methods can be employed such as gradient-descent methods and second-order methods, resulting in early termination of the robust cutting plane algorithm.  For non-convex LLP problems approximate solution approaches have been proposed relying on the restricted right hand side methodology to gradually reduce the approximation error based on SIP literature \cite{Mitsos2009,Mitsos2015,Harwood2021}.

In this study, we focus on the case where $g_i$ is affine in $\bm u$ and we are evaluating box, polyhedral and ellipsoidal sets, hence exactness of the solution is satisfied. For a specified feasibility tolerance $\delta>0$, if a solution $\bm u^*$ is detected, such that $g_i(\bm x^*,\bm u^*)>\delta$, then it is added on subset $\hat{\mathcal{U}_i}$.
Let $\hat{G}(\bm x)$ be the worst-case constraint function 
\begin{equation*}
    \hat{G_i}(\bm x)=\max \{g_i(\bm x,\bm u_{i,0}),\cdots,g_i(\bm x,\bm u_{i,K_i})\}~ i=1,\cdots,m
\end{equation*}

it is true that 
\begin{equation*}
    g_i(\bm x, \bm u_{i,0})\leq\hat{G}(\bm x)\leq G(\bm x)
\end{equation*}

where $G(x)=\underset{\bm u\in \mathcal{U}}{\max }\, g_i(\bm x,\bm u)$. 
\begin{proposition}\label{pr:bound}
An optimal solution of the sampled problem provides a lower bound for the robust problem.
\end{proposition}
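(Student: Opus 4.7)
The plan is to prove Proposition \ref{pr:bound} by a direct feasible-set inclusion argument, since the result is essentially structural rather than analytic.

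First I would observe that, by construction, $\hat{\mathcal{U}_i} \subseteq \mathcal{U}$ for every $i=1,\dots,m$. Hence any $\bm x \in \mathcal{X}$ that satisfies $g_i(\bm x,\bm u)\leq 0$ for all $\bm u \in \mathcal{U}$ (robust feasibility) automatically satisfies $g_i(\bm x,\bm u)\leq 0$ for all $\bm u \in \hat{\mathcal{U}_i}$ (sampled feasibility). Denoting by $\mathcal{F}_{rob}$ and $\mathcal{F}_{samp}$ the feasible sets of \eqref{eq:GRO} and \eqref{eq:samp} respectively, this gives the inclusion $\mathcal{F}_{rob}\subseteq \mathcal{F}_{samp}$.

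Next I would invoke the monotonicity of the infimum under set inclusion: since both problems share the same objective $f(\bm x)$ and the same ambient domain $\mathcal{X}$, and since $\mathcal{F}_{rob}\subseteq \mathcal{F}_{samp}$, we have
\begin{equation*}
\inf_{\bm x \in \mathcal{F}_{samp}} f(\bm x) \;\leq\; \inf_{\bm x \in \mathcal{F}_{rob}} f(\bm x).
\end{equation*}
Assumption \ref{as:onu} guarantees $\mathcal{F}_{rob}\neq \emptyset$, which in particular ensures $\mathcal{F}_{samp}\neq \emptyset$ so both infima are well defined; Assumption \ref{as:bounded} together with the continuity implied by Assumption \ref{as:lips} ensures that the infima are attained, so one may speak of an \emph{optimal} sampled solution $\bm x^*_{samp}$ with $f(\bm x^*_{samp})\leq f(\bm x^*_{rob})$. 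This inequality is precisely the claim that the sampled optimum provides a lower bound on the robust optimum.

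There is no real obstacle here; the only thing to be careful about is making sure the two problems share exactly the same $\bm x$-domain and objective, so that the comparison is legitimate and not an artefact of different restrictions on $\bm x$. The argument is independent of convexity, of the dimension of $\bm u$, and of the specific shape of $\mathcal{U}$, which is convenient because the same bound will be reused inside the RsBB algorithm regardless of whether $\mathcal{U}$ is box, polyhedral or ellipsoidal.
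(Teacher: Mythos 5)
Your proof is correct and follows essentially the same route as the paper's: the inclusion $\hat{\mathcal{U}_i}\subseteq\mathcal{U}$ gives $\mathcal{F}_{rob}\subseteq\mathcal{F}_{samp}$, and monotonicity of the optimal value under feasible-set inclusion yields $z^*_{samp}\leq z^*_{rob}$. The only differences are cosmetic: the paper also places the nominal problem in the same chain of inclusions, while you add the (harmless, slightly more careful) remarks on non-emptiness and attainment of the infima.
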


\begin{proof}
    Let $\mathcal{F}_{rob},~\mathcal{F}_{samp},~\mathcal{F}_{nom}$ the feasible sets and $z^*_{rob},~z^*_{samp},~z^*_{nom}$ the optimal values for the robust, sampled and nominal problems. Then,
    $\mathcal{F}_{nom}:=\{\bm x\in \mathcal{X}, \bm u \in \hat{\mathcal{U}}_0: g_i(\bm x,\bm u)\leq0\}$, $\mathcal{F}_{samp}:=\{\bm x\in \mathcal{X}, \bm u \in \hat{\mathcal{U}_i}: g_i(\bm x,\bm u)\leq0\}$ and $\mathcal{F}_{rob}:=\{\bm x\in \mathcal{X}, \bm u \in\mathcal{U}: g_i(\bm x,\bm u)\leq0\}$. By definition $\hat{\mathcal{U}}_0\subseteq\hat{\mathcal{U}}\subseteq\mathcal{U}$. Hence, $\mathcal{F}_{rob}\subseteq \mathcal{F}_{samp} \subseteq \mathcal{F}_{nom}$ and $z^*_{rob}\geq z^*_{samp}\geq z^*_{nom}$.
\end{proof}

\begin{proposition}\label{pro:opt}
    If an optimal solution of the sampled problem is feasible for the robust problem, then it is also optimal for the robust problem.
\end{proposition}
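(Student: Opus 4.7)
The plan is to combine the bound from Proposition \ref{pr:bound} with the hypothesis that the sampled optimum is robust feasible, and conclude by a two-sided inequality on the optimal values.

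First I would let $\bm x^*_{samp}$ denote an optimal solution of the sampled problem \eqref{eq:samp}, so that $f(\bm x^*_{samp}) = z^*_{samp}$. By hypothesis, $\bm x^*_{samp}$ is feasible for the robust problem \eqref{eq:GRO}, i.e. $\bm x^*_{samp} \in \mathcal{F}_{rob}$. Since $z^*_{rob}$ is the infimum of $f$ over $\mathcal{F}_{rob}$, this immediately yields the upper bound $z^*_{rob} \le f(\bm x^*_{samp}) = z^*_{samp}$.

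Next I would invoke Proposition \ref{pr:bound}, which provides the reverse inequality $z^*_{rob} \ge z^*_{samp}$. Combining the two gives $z^*_{rob} = z^*_{samp}$, so that $f(\bm x^*_{samp}) = z^*_{rob}$. Since $\bm x^*_{samp} \in \mathcal{F}_{rob}$ attains this value, it is an optimal solution of the robust problem.

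There is no real obstacle here: the proof is essentially a sandwiching argument that reuses Proposition \ref{pr:bound}. The only subtlety worth flagging explicitly is that the hypothesis \textquotedblleft feasible for the robust problem\textquotedblright\ must be interpreted as $g_i(\bm x^*_{samp}, \bm u) \le 0$ for every $\bm u \in \mathcal{U}$ and every $i$, rather than only for the sampled points in $\hat{\mathcal{U}}_i$; otherwise the inclusion $\bm x^*_{samp} \in \mathcal{F}_{rob}$ would not be justified. With that interpretation in place, the statement follows in a few lines with no additional regularity requirements on $f$, $g_i$ or $\mathcal{U}$.
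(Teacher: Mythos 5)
Your proof is correct and follows essentially the same route as the paper: robust feasibility of $\bm x^*_{samp}$ gives $z^*_{rob}\le z^*_{samp}$, while Proposition \ref{pr:bound} gives $z^*_{rob}\ge z^*_{samp}$, so the two values coincide and $\bm x^*_{samp}$ is robust optimal. Your version merely spells out the upper-bound inequality and the interpretation of robust feasibility ($G(\bm x^*_{samp})\le 0$) a bit more explicitly than the paper does.
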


\begin{proof}
Let $\bm x^*_{samp}\in \mathcal{F}_{samp}$ be an optimal solution for the sampled problem. From Proposition \ref{pr:bound} $z^*_{rob}\geq z^*_{samp}$. If $G(\bm x^*_{samp})\leq0$ then $\bm x^*_{samp}\in \mathcal{F}_{rob}$ as the sampled solution belongs to the robust feasible region. Hence  $z^*_{rob}= z^*_{samp}$ at $\bm x^*_{samp}$.     
\end{proof}

An outline of the robust cutting set algorithm is provided in Algorithm \ref{alg:RCP}. We note that \citet{Isenberg2021} have implemented a generalized cutting set method in the PyROS solver enabling the solution of non-convex problems as well.

\begin{algorithm}
\caption{Robust Cutting Set Algorithm}
\label{alg:RCP}
\begin{algorithmic}[1] 
\Require Feasibility tolerance $\delta>0$
    \State Solve \eqref{eq:nom} $\rightarrow \bm x^*$
    \For{$i=1,\cdots,m$}
    \State Solve \eqref{eq:maxg} $\rightarrow \bm u^*$ 
    \If {$g_i(\bm u^*, \bm x^*) > \delta$}
        \State Append the robust set $\hat{\mathcal{U}_i}\leftarrow \hat{\mathcal{U}_i} \cup \{\bm u^*\}$
\EndIf
\EndFor
\If {$\underset{i=1,\cdots,m}{\max} g_i(\bm u^*, \bm x^*) > \delta$}
            \State Solve \eqref{eq:samp} $\rightarrow \bm x^*$
            \State Go to Step 2
    \Else
        \State Go to Step 12
    \EndIf

    \State \textbf{Terminate:} Robust solution obtained
\end{algorithmic}
\end{algorithm}

\subsection{Robust spatial branch-and-bound}
The aim of this work is to solve problems with non-convex bilinear functions $f,~g_i$ in $\bm x$ under robust uncertainty using an sBB algorithm via McCormick relaxations. We define the auxiliary variables for the bilinear $\bm y\in \mathcal{Y}\subset \mathbb{R}^{n_b\times n_b}$ such that $y_{bj}=x_bx_j, ~ b,j=1,\cdots, n_b$. Hence, we derive the augmented representation of \eqref{eq:GRO} using the auxiliary variables such that:

\begin{equation}\tag{$P'_{rob}$}\label{eq:auxGRO}
\begin{aligned}
&\underset{\bm x, \bm y}{\min}~\tilde{f}(\bm x, \bm y)\\
&\text{s.t.}~ \tilde{g_i}(\bm x,\bm y,\bm u)\leq 0,~\forall \bm u \in \mathcal{U},\quad i=1,\cdots,m\\
&\ y_{bj}=x_bx_j, ~b,j=1,\cdots,n_b\\
&\ \bm x \in \mathcal{X}, \bm y \in \mathcal{Y}
\end{aligned}
\end{equation}

where $\tilde{f},~\tilde{g}_i:\mathcal{X}\times \mathcal{Y}\rightarrow\mathbb{R}$ denote the augmented objective function and constraints when the auxiliary variables $\bm y$ are employed. Note that problems \eqref{eq:GRO} and \eqref{eq:auxGRO} are equivalent and both non-convex. To derive the convex relaxation of \eqref{eq:auxGRO} the equality constraints for the auxiliary variables are relaxed using McCormick envelopes\citep{McCormick1976}:

\begin{equation}
   \mu_{bj}(\bm x, \bm y)\leq 0,~b,j=1,\cdots, n_b 
\end{equation}
where
\begin{equation}\label{eq:env}
    \mu_{bj}(\bm x, \bm y):=\begin{cases}
     \underline{x_b}x_j+x_b\underline{x_j} -\underline{x_b}\underline{x_j}-   y_{bj} \\
     y_{bj}-\overline{x_b}x_j-x_b\underline{x_j}+\overline{x_b}\underline{x_j}\\
     \overline{x_b}x_j +x_b\overline{x_j}-\overline{x_b}\overline{x_j}-y_{bj}\\
     y_{bj}-\underline{x_b}x_j-x_b\overline{x_j}+\underline{x_b}\overline{x_j}\
    \end{cases} \quad b,j=1,\cdots, n_b
\end{equation}

Then the relaxed convex nominal problem \eqref{eq:nom_cv} and sampled problem \eqref{eq:samp_cv} are formulated as follows.

\begin{equation}\tag{$\tilde{P}_{nom}$}\label{eq:nom_cv}
\begin{aligned}
&\underset{\bm x, \bm y}{\min}\tilde{f}(\bm x, \bm y)\\
&\text{s.t.}~ \tilde{g}_i(\bm x,\bm y,\bm u_{nom})\leq 0 , ~i=1,\cdots,m\\
& \qquad \mu_{bj}(\bm x,\bm y)\leq 0, ~b,j=1,\cdots, n_b\\
&\bm x \in \mathcal{X}, ~\bm y \in \mathcal{Y}
\end{aligned}
\end{equation}

\begin{equation}\tag{$\tilde{P}_{samp}$}\label{eq:samp_cv}
\begin{aligned}
&\underset{\bm x, \bm y}{\min}\tilde{f}(\bm x, \bm y)\\
&\text{s.t.}~ \tilde{g}_i(\bm x,\bm y,\bm u)\leq 0 ,~ \bm u \in \hat{\mathcal{U}_i} ~i=1,\cdots,m\\
& \qquad \mu_{bj}(\bm x,\bm y)\leq 0,  ~ ~b,j=1,\cdots, n_b\\
&\bm x \in \mathcal{X}, ~\bm y \in \mathcal{Y}
\end{aligned}
\end{equation}
 
\begin{proposition}
    The relaxed problem \eqref{eq:samp_cv} provides a lower bound for the original robust problem \eqref{eq:GRO}.
\end{proposition}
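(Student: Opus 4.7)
The plan is to chain two relaxation steps. First, I would recall from Proposition \ref{pr:bound} that the (non-convex) sampled problem \eqref{eq:samp} is already a lower bound for the robust problem \eqref{eq:GRO}, i.e.\ $z^{*}_{samp}\leq z^{*}_{rob}$, since $\hat{\mathcal{U}}_i\subseteq\mathcal{U}$ implies $\mathcal{F}_{rob}\subseteq\mathcal{F}_{samp}$ and both problems share the same objective $f=\tilde{f}$ once $\bm y$ is eliminated. So it suffices to show that the McCormick-relaxed convex sampled problem \eqref{eq:samp_cv} is in turn a lower bound for \eqref{eq:samp}.

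Next I would pass through the augmented reformulation. Any feasible $\bm x$ of \eqref{eq:samp} can be lifted to $(\bm x,\bm y)$ with $y_{bj}=x_bx_j$, yielding a feasible point of the augmented sampled problem (which is equivalent to \eqref{eq:samp} by construction, analogously to \eqref{eq:auxGRO}). The key classical property of the McCormick envelopes \citep{McCormick1976} is that for every $(b,j)$ and every $\bm x\in\mathcal{X}$ the identity $y_{bj}=x_bx_j$ implies the four inequalities in \eqref{eq:env}: each line of \eqref{eq:env} is obtained from a nonnegative product of the form $(x_b-\underline{x_b})(x_j-\underline{x_j})\geq 0$, $(\overline{x_b}-x_b)(x_j-\underline{x_j})\geq 0$, etc., which hold on $\mathcal{X}$. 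Hence $\mu_{bj}(\bm x,\bm y)\leq 0$ is satisfied by the lifted point.

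It follows that the lifted feasible set of \eqref{eq:samp} is contained in the feasible set of \eqref{eq:samp_cv}. Since the two problems share the same objective $\tilde{f}(\bm x,\bm y)$ (which, evaluated on the lift, coincides with $f(\bm x)$), minimizing over the larger set gives $z^{*}_{\tilde{samp}}\leq z^{*}_{samp}$. Combining with Proposition \ref{pr:bound} yields the desired chain
\begin{equation*}
z^{*}_{\tilde{samp}}\;\leq\;z^{*}_{samp}\;\leq\;z^{*}_{rob},
\end{equation*}
which is exactly the claim.

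I do not anticipate a real obstacle: the argument is a clean two-step relaxation (uncertainty-set sampling, then bilinear equality $\to$ McCormick envelope). The only point to be careful about is the equivalence between the original sampled problem and its augmented version, so that replacing $f$ by $\tilde{f}$ does not change the optimal value; this is immediate from the defining substitution $y_{bj}=x_bx_j$ together with Assumption \ref{as:lips}, which makes $\tilde{f}$ and $\tilde{g}_i$ well-defined affine functions of $(\bm x,\bm y)$ that reduce to the bilinear $f,g_i$ on the graph of this substitution.
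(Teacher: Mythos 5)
Your proposal is correct and follows essentially the same route as the paper: chain Proposition \ref{pr:bound} (sampling relaxes the robust constraints) with the fact that the McCormick envelopes \eqref{eq:env} relax the bilinear equalities $y_{bj}=x_bx_j$, yielding $\mathcal{F}_{rob}\subseteq\mathcal{F}_{samp}\subseteq\tilde{\mathcal{F}}_{samp}$ and hence the lower-bound property. Your version is slightly more explicit than the paper's (justifying the envelope inequalities via nonnegative products and checking that the objectives agree on the lifted points), but the argument is the same.
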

\begin{proof}
Let $\tilde{\mathcal{F}}_{rob},~\tilde{\mathcal{F}}_{samp},~\tilde{\mathcal{F}}_{nom}$ be the robust, sampled and nominal feasible regions of the relaxed convex problem. Similar to the reformulation of the robust non-convex problem, the auxiliary variables $\bm y$ can be used for the sampled problem and the corresponding feasible region is defined as 
\begin{equation*}
    \mathcal{F}_{samp}:=\left\{\bm x\in \mathcal{X},~ \bm y \in \mathcal{Y} \;\middle|\;
\begin{aligned}
& \tilde{g}_i(\bm{x}, \bm{y}, \bm{u}) \le 0, && \forall\, \bm{u} \in \hat{\mathcal{U}}_i,\; i = 1, \dots, m \\
& y_{bj} = x_b x_j, && b,j = 1, \dots, n_b
\end{aligned}
\right\}
\end{equation*}
and the relaxed feasible region

\begin{equation*}
    \tilde{\mathcal{F}}_{samp}:=\left\{\bm x\in \mathcal{X},~ \bm y \in \mathcal{Y} \;\middle|\;
\begin{aligned}
& \tilde{g}_i(\bm{x}, \bm{y}, \bm{u}) \le 0, && \forall\, \bm{u} \in \hat{\mathcal{U}}_i,\; i = 1, \dots, m \\
& \mu_{bj}(\bm x,\bm y)\leq 0, && b,j = 1, \dots, n_b
\end{aligned}
\right\}
\end{equation*}

The McCormick envelopes provide a relaxation for the original problem, hence $\mathcal{F}_{samp}\subseteq\tilde{\mathcal{F}}_{samp}$. From Proposition \ref{pr:bound} the relaxed problem provides also a lower bound for the robust problem $\mathcal{F}_{rob}\subseteq \mathcal{F}_{samp} \subseteq \tilde{\mathcal{F}}_{samp}$.    
\end{proof}

In the Robust spatial branch-and-bound (RsBB) algorithm we consider a two-level problem approach as in the robust cutting plane algorithm. The upper-level is comprised of the deterministic problems \eqref{eq:samp} and \eqref{eq:samp_cv}. In the lower-level \eqref{eq:maxg} problems evaluate the robustness of the solution of the upper-level non-convex problem. \eqref{eq:samp} is solved to local optimality while  \eqref{eq:samp_cv} and \eqref{eq:maxg} are solved to global optimality. In case of an infeasible The RsBB algorithm is initialized in the root node for the original variable bounds $\mathcal{X}_0=\mathcal{X}$. Note that for both upper-level problems the domain $\mathcal{X}$ and the subset $\hat{\mathcal{U}_i}$ are the same at each node. The domain $\mathcal{X}$ is partitioned into subsets based on the sBB steps of the algorithm and the subset $\hat{\mathcal{U}}_i$ is augmented by any uncertainty samples that violate the robustness of the obtained solution. All waiting nodes are defined for the same sampled uncertainty set $\hat{\mathcal{U}}_i$. In case of an infeasibility detection, the sampled uncertainty set $\hat{\mathcal{U}}_i$ is updated in all waiting nodes and the lower bound solutions need to be re-evaluated. For a given node, $n$ problems $P_{samp}^n$ and $\tilde{P}_{samp}^n$ are defined for $\mathcal{X}_n$, with $\mathcal{F}_n, ~\tilde{\mathcal{F}}_n$  the corresponding feasible regions and $z_n,~\tilde{z}_n$ the objective value of the non-convex and relaxed problems. If $P_{samp}^n$ is found infeasible by the local solver then we set $z_n=+\infty$, the same applies for the relaxed problem. The proposed algorithm for the RsBB implementation is displayed in Algorithm \ref{alg:RSBB} and the Infeasibility Test in Algorithm \ref{alg:InfesTest}.

\begin{lemma}\label{lem:infe_term}
The Infeasibility Test Algorithm \ref{alg:InfesTest} terminates in a finite number of iterations $C$.
\end{lemma}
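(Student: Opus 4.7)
The plan is to recognise the Infeasibility Test at a fixed node $n$ as an application of the robust cutting set procedure (Algorithm \ref{alg:RCP}) to the local subproblem over $\mathcal{X}_n$, and then to invoke the finite termination argument of \citet{Mutapcic2009} after verifying that all of its hypotheses carry over to the present setting.

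First I would verify the prerequisites. Since $\mathcal{X}_n\subseteq \mathcal{X}$ is obtained from $\mathcal{X}$ by successive box splits, compactness of $\mathcal{X}_n$ is inherited from Assumption \ref{as:bounded}; the auxiliary domain $\mathcal{Y}$ is likewise compact since the McCormick envelopes \eqref{eq:env} combined with the bounds on $\bm x$ force $\bm y$ into a bounded box. Assumption \ref{as:lips} then yields that each augmented constraint $\tilde{g}_i(\cdot,\cdot,\bm u)$ is Lipschitz on this compact domain, uniformly in $\bm u\in\mathcal{U}$. Assumptions \ref{as:onu} and \ref{as:gaff} ensure that \eqref{eq:maxg} reduces to maximising a function that is affine in $\bm u$ over a compact convex set (box, polyhedral, or ellipsoidal), so the LLP can be solved exactly, and each iteration either accepts a sample with strict violation $g_i(\bm x^\ast,\bm u^\ast)>\delta$ or certifies termination.

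The core step would be a contradiction argument in the spirit of \citet{Mutapcic2009}. Suppose the test produced an infinite sequence of accepted samples $\{\bm u^k\}\subset \mathcal{U}$ with associated ULP solutions $\{\bm x^k\}$ such that $g_i(\bm x^k,\bm u^k)>\delta$ for every $k$; because each $\bm u^l$ with $l<k$ has already been appended to $\hat{\mathcal{U}}_i$ and is therefore enforced as a constraint at iteration $k$, we also have $g_i(\bm x^k,\bm u^l)\le 0$ for every $l<k$. By Bolzano--Weierstrass on the compact set $\mathcal{X}_n\times\mathcal{Y}\times\mathcal{U}$, a subsequence $(\bm x^{k_j},\bm u^{k_j})\to(\bm x^\infty,\bm u^\infty)$. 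Passing to the limit along the diagonal (using Lipschitz continuity of $g_i$, so that $g_i(\bm x^{k_j},\bm u^{k_l})\to g_i(\bm x^\infty,\bm u^\infty)$ when both $j,l\to\infty$) gives simultaneously $g_i(\bm x^\infty,\bm u^\infty)\le 0$ from the feasibility inequalities and $g_i(\bm x^\infty,\bm u^\infty)\ge \delta>0$ from the violation inequalities, a contradiction. Hence no such infinite sequence can exist and the algorithm halts after some finite $C$.

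The main subtlety I expect is the bookkeeping between the two upper-level problems of the test, namely the non-convex $P_{samp}^n$ solved locally and its relaxation $\tilde{P}_{samp}^n$ solved globally. Accepted samples are appended to the subset $\hat{\mathcal{U}}_i$ used by both problems, so one must argue that whichever ULP actually feeds the LLP at each iteration still produces a strictly separating cut of magnitude at least $\delta$. Once this matching is made explicit, the Lipschitz-plus-compactness separation argument above applies uniformly, and the existence of the finite bound $C$ on the number of Infeasibility Test iterations follows.
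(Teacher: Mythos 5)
Your proposal is correct and its first half coincides with the paper's proof: the paper likewise identifies the Infeasibility Test as the robust cutting set algorithm applied at a node where $(P_{samp}^n)$ is solvable, checks that Assumptions \ref{as:bounded}--\ref{as:gaff} guarantee exactness of the LLP, and then simply cites \citet{Mutapcic2009} (Section 5.2) for finite termination, quoting the explicit bound $C\leq (\tfrac{RL}{\delta}+1)^{n_x}$ with $L$ the Lipschitz constant and $R$ the radius of a ball containing the feasible set. Where you diverge is in the core step: instead of invoking that citation, you reconstruct termination from scratch via a Bolzano--Weierstrass contradiction (an infinite sequence of accepted samples would yield a limit point at which $g_i\le 0$ and $g_i\ge\delta$ simultaneously), which is essentially the Blankenship--Falk-style argument rather than the packing argument behind Mutapcic and Boyd's bound. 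Both rest on the same ingredients (compactness, Lipschitz continuity, strict violation by $\delta>0$); the paper's route buys an explicit quantitative bound on $C$, while yours is self-contained but only yields finiteness, which is all the lemma claims, so it suffices. Two small tidying remarks: your contradiction argument should fix, by pigeonhole, a single constraint index $i$ that is violated infinitely often before extracting the subsequence; and the ``subtlety'' about two upper-level problems dissolves on inspection of Algorithm \ref{alg:InfesTest}, since only $(P_{samp}^n)$ is re-solved inside the test loop --- the relaxation $\tilde{P}_{samp}^n$ is only updated afterwards in the outer RsBB loop, so every LLP call is fed by a point feasible for all previously appended samples, exactly as your separation step requires.
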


\begin{proof}
Algorithm \ref{alg:InfesTest} is a robust cutting planes algorithm  applied to nodes for which a solution to $(P_{samp}^n)$ is obtained. Since $(P_{samp}^n)$ is assumed to be tractable, the algorithm remains well-defined at every iteration. Moreover, by Assumptions \ref{as:bounded} - \ref{as:gaff} the solution of the LLP is exact hence the conditions required for finite convergence of Algorithm \ref{alg:RCP} are satisfied. Thus, by the results in \citet{Mutapcic2009} (Section 5.2), Algorithm \ref{alg:InfesTest} terminates in a finite number of iterations $C\leq (\frac{RL}{\delta}+1)^{n_x} $, where $L$ is the Lipschitz constant, $\delta$ the feasibility tolerance of the algorithm and $R$ the radius of the smallest ball entailing $\mathcal{F}_{nom}$.
\end{proof}
The effectiveness of the sBB algorithm heavily depends on the choice of bounding, branching, and node update strategies. Our bounding strategy relies on McCormick relaxations \citep{Mitsos2009,Bompadre2012}, which generate degenerate perfect sequences of under-estimators [\citet{Scott2011}, Theorem 5]. For branching, we adopt the bisection method, which ensures an exhaustive subdivision of the original variable domain [\citet{Horst1996}, Definition IV.10]. The node update strategy follows a best-first approach, selecting the node with the lowest lower bound. This method is node-improving [\citet{Horst1996}, Definition IV.6], meaning it prioritizes promising candidates for early exploration \citep{Locatelli2013}. An additional decision lies in the choice of branching variable. In this work we consider the maximum violation strategy \citep{Smith1999,Tawarmalani2004, Belotti2009} and a pseudoscore strategy \citep{Achterberg2005,Belotti2009}. The use the maximum approximation error strategy is prioritized and if the errors are below the selected tolerance, then the branching variable is selected based on the pseudoscore. We define the branching variable selected via the maximum approximation error as: 
\begin{equation}
    x_{br}=\underset{b,j =1,\cdots,n_{b}}{\argmax}(\max|y_{bj}-x_bx_j|)\}, \quad br\ =1,\cdots,n_{b}
\end{equation}
where $x_b,~x_j ~b,j =1,\cdots,n_{b}$ the variables participating in bilinear formulations and $y_{bj}$ the auxiliary variables of Eqs.\ref{eq:env}.
If the maximum approximation error is below the selected tolerance, the branching variable is selected based on a pseudoscore. For the pseudoscore calculation, strong branching is taking place on the evaluating node $n$ and the variable with the most promising lower bound improvement is selected. In contrast to the approximation error approach, in pseudoscore branching all variables are candidates for branching $br=1,\cdots,n_x$. The intervals for child nodes can be defined as $\delta_i^{n,+}=\overline{x}_i- x_i^b$ and $\delta_i^{n,-}= x_i^b -\underline{x}_i, ~i=1,\cdots, n_x$. Let $\phi_i^{n,+},~\phi_i^{n,-}$ lower bound values of the child nodes for branching in variable $i$ and $\phi^{n}$ the lower bound of the parent node. If any of the child nodes is infeasible, the corresponding lower bound is set to 0. The lower bound improvement of the pseudonodes is evaluated as $\Delta_i^+=\phi_i^{n,+}-\phi^{n}$ and $\Delta_i^+=\phi_i^{n,-}-\phi^{n}$. Let $\varsigma_i^+=\Delta_i^+/\delta_i^+$ be the objective gain for branching on variable $i$. Let $x_{br}$ be the variable selected for branching via pseudoscores:

 \begin{equation}
     x_{br}=\underset{ ~i=1,\cdots,n_{x}}{\argmax}(\varsigma_i^+,\varsigma_i^-)\}, \quad br=1,\cdots,n_{x}
 \end{equation}

\begin{lemma} \label{lem:const}
    The lower bounding operation of the RsBB is consistent.
\end{lemma}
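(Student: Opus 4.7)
The plan is to verify the standard notion of bounding consistency in the sense of Horst and Tuy (Definition IV.7 in the reference cited by the paper): that is, for any decreasing nested sequence of partition subboxes $\{\mathcal{X}_n\}$ with $\mathcal{X}_{n+1} \subset \mathcal{X}_n$ and $\mathrm{diam}(\mathcal{X}_n) \to 0$, the lower bound $\tilde z_n$ produced by the relaxed sampled problem $(\tilde P_{samp}^n)$ on $\mathcal{X}_n$ converges to the true value of the non-convex sampled problem $(P_{samp}^n)$ restricted to $\mathcal{X}_n$. Once this pointwise-limit property is established, consistency of the bounding operation follows from the general equivalence stated in Horst 1996, Theorem IV.3.

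First I would assemble the two ingredients that the paper has already invoked when defining the bounding and branching rules. Bisection supplies an exhaustive subdivision of $\mathcal{X}$ (Horst 1996, Definition IV.10), so every infinite chain of successively refined subboxes shrinks to a single point $\bm x^{\ast}\in\mathcal{X}$. The McCormick envelopes $\mu_{bj}$ defined in equation~\eqref{eq:env} form a degenerate perfect scheme of underestimators on any compact box (Scott 2011, Theorem 5; Bompadre and Mitsos 2012), which in particular yields $\max_{(\bm x,\bm y)\in\tilde{\mathcal F}_n}|y_{bj}-x_bx_j|\to 0$ as $\mathrm{diam}(\mathcal{X}_n)\to 0$. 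Combined with the bilinearity hypothesis of Assumption~\ref{as:lips}, this gives pointwise and indeed uniform convergence of $\tilde f$ and each $\tilde g_i$ over the relaxed feasible set to their values on the graph $y_{bj}=x_bx_j$.

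Next I would argue that the presence of the robust cutting planes $\tilde g_i(\bm x,\bm y,\bm u)\le 0$ for $\bm u\in\hat{\mathcal U}_i$ does not disturb this convergence. By Assumption~\ref{as:gaff} each such cut is affine in $\bm u$ and by Assumption~\ref{as:lips} bilinear or linear in $\bm x$, so its McCormick relaxation belongs to the same degenerate perfect family. Adding a finite collection of such cuts only tightens both $\mathcal{F}_n$ and $\tilde{\mathcal F}_n$ and preserves the inclusion $\mathcal{F}_n\subseteq\tilde{\mathcal F}_n$, so the gap between the non-convex and the relaxed sampled values is still bounded above by the maximum McCormick approximation error, which tends to zero on nested subboxes. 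Passing to the limit along a chain $\{\mathcal{X}_n\}$ collapsing to $\bm x^{\ast}$ therefore yields $\tilde z_n\to\tilde f(\bm x^{\ast},\bm x^{\ast}\bm x^{\ast\top})=f(\bm x^{\ast})$, which is precisely the value of $(P_{samp}^n)$ at the singleton; this is the desired consistency.

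The main subtlety I expect is that the sampled uncertainty set $\hat{\mathcal U}_i$ is not a static object: it is augmented each time the Infeasibility Test uncovers a violating $\bm u^{\ast}$, and this update propagates to every waiting node. To handle this I would rely on Lemma~\ref{lem:infe_term}, which guarantees that only finitely many such augmentations can occur at any given node before termination. Because every augmentation only adds valid inequalities, the sequence of relaxed lower bounds on a fixed subbox is monotone non-decreasing and bounded above by the true minimum, so finitely many monotone tightenings preserve the limiting behaviour along any nested chain. Consistency, being a statement about the asymptotics of $\tilde z_n$ as $\mathrm{diam}(\mathcal{X}_n)\to 0$, is therefore unaffected, and the lemma follows.
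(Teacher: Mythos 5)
Your proposal is correct and follows essentially the same route as the paper: it rests on the McCormick envelopes forming a degenerate perfect (under)estimation scheme [Scott 2011, Theorem 5] together with the exhaustiveness of bisection, which is exactly the paper's argument. The extra care you take in checking that the sampled robust cuts and the finitely many augmentations of $\hat{\mathcal{U}}_i$ (via Lemma~\ref{lem:infe_term}) do not disturb the limit is a welcome tightening of details the paper's one-line proof leaves implicit, but it does not change the underlying approach.
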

\begin{proof}
    Let $\{M_{k}\}$ be an infinite sequence of nested nodes and $\beta(M_{k})$ the lower bounding operation at node $M_{k}$ defined by McCormick underestimators. Since McCormick relaxations are degenerate perfect relaxations [\citet{Scott2011},Theorem 5], the lower bounding operation is consistent and converges to a feasible solution $\bm x$ of the sampled problem \eqref{eq:samp} $\underset{k\rightarrow \infty}{\lim}\beta(M_{k})\rightarrow f(\bm x^*)$.
\end{proof}

\begin{lemma}\label{lem:conv}
    The lower bounding operation of the RsBB converges to an optimal solution of the sampled problem \eqref{eq:samp}.
\end{lemma}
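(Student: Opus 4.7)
The plan is to combine the three classical ingredients for convergence of a spatial branch-and-bound (Horst and Tuy, cited already in the excerpt as \citet{Horst1996}): an exhaustive subdivision rule, a consistent bounding operation, and a node-improving (bound-improving) selection rule. The lemma is essentially the statement that these three ingredients are present in the RsBB, so the proof should be a short verification together with a careful handling of the subtlety that the sampled uncertainty set $\hat{\mathcal{U}}_i$ may grow during the run of the algorithm.

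First, I would fix any infinite nested sequence of nodes $\{M_k\}$ generated by the RsBB. The branching rule is bisection on the variable chosen either by the maximum-violation or pseudoscore criterion, and bisection is exhaustive in the sense of \citet{Horst1996}, Definition IV.10, so $\operatorname{diam}(M_k)\to 0$. The node-selection rule is best-first, which is node-improving in the sense of \citet{Horst1996}, Definition IV.6, so if $z^*_{samp}$ denotes the optimum of \eqref{eq:samp}, then the sequence of lower bounds $\beta(M_k)$ of the selected nodes increases monotonically toward $z^*_{samp}$ provided the bounding operation is consistent. Lemma \ref{lem:const} gives exactly that consistency for the McCormick-based underestimators, using the degenerate-perfect property of \citet{Scott2011}, Theorem 5.

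Next, I would address the interaction with the robust cutting planes. By Lemma \ref{lem:infe_term}, the Infeasibility Test invoked at any node terminates in finitely many iterations, so each update of $\hat{\mathcal{U}}_i$ happens a finite number of times at a given node and propagates once to all waiting nodes. Because the uncertainty samples are generated from the exact LLP solution and the cutting planes are added with feasibility tolerance $\delta>0$, the sequence of augmented sets $\hat{\mathcal{U}}_i$ is nondecreasing and, by Lemma \ref{lem:infe_term} applied across the (at most countable) sequence of explored nodes, stabilizes after finitely many augmentations before any infinite subdivision chain could occur. Hence, along the tail of $\{M_k\}$, the sampled problem \eqref{eq:samp} is fixed and the standard Horst-Tuy machinery applies without modification.

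Finally, combining the exhaustive subdivision, the consistent bounding operation of Lemma \ref{lem:const}, and the node-improving selection, I would invoke \citet{Horst1996}, Theorem IV.3 (the prototype convergence theorem for exhaustive branch-and-bound with consistent bounding and bound-improving selection) to conclude that the lower bounds produced by the RsBB converge to $z^*_{samp}$, and that every accumulation point of the corresponding incumbent solutions is an optimal solution of \eqref{eq:samp}. The main obstacle is the bookkeeping for the uncertainty-set updates: one has to argue that the growth of $\hat{\mathcal{U}}_i$ does not preclude the classical convergence argument, which reduces to showing that only finitely many augmentations occur before the bounding operation stabilizes on the tail of any infinite nested sequence of nodes.
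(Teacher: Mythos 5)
Your proposal follows essentially the same route as the paper's proof: verify exhaustive bisection subdivision, best-first (bound-improving) node selection, and consistency of the McCormick lower bounding operation via Lemma~\ref{lem:const}, then invoke \citet{Horst1996}, Theorem~IV.3. Your additional argument that the augmentations of $\hat{\mathcal{U}}_i$ stabilize after finitely many cuts (via Lemma~\ref{lem:infe_term}) is a point the paper's proof leaves implicit, and it is a reasonable and welcome strengthening rather than a divergence in method.
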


\begin{proof}
   By the definition of the RsBB algorithm the following conditions holds:
   \begin{itemize}
       \item The bisection partitioning guarantees that the partitioning is exhaustive.
       \item A best-first approach guarantees a bound improving partitioning.
       \item The lower bounding operation is consistent (Lemma \ref{lem:const}).
   \end{itemize}
Hence by [\citet{Horst1996}, Theorem IV.3]$ \underset{k\rightarrow \infty}{\lim}\beta(M_{k})\rightarrow z^*_{samp}$
\end{proof}

\begin{lemma}\label{lem:UBconv}
Assuming that the local solver is convergent, then the upper bounding operation of the RsBB algorithm converges to an optimal solution of the sampled problem \eqref{eq:samp}
\end{lemma}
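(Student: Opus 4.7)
The plan is to combine the exhaustive-subdivision property of the bisection branching rule with the Lipschitz continuity of $\tilde{f}$ to show that any feasible iterate returned by the local solver at a sufficiently small node has objective value arbitrarily close to $z^*_{samp}$. First, I would fix a globally optimal solution $\bm x^*_{samp}$ of the sampled problem \eqref{eq:samp} and, as in the proof of Lemma \ref{lem:conv}, invoke the exhaustive-subdivision property of bisection (Horst and Tuy, Definition IV.10) to construct an infinite nested sequence of nodes $\{M_k\}$ whose boxes $\mathcal{X}_{M_k}$ all contain $\bm x^*_{samp}$ and whose diameters tend to zero. Since $\bm x^*_{samp}$ is feasible for the restricted node problem $(P_{samp}^{M_k})$ for every $k$, that problem is non-empty, and a convergent local solver returns some feasible iterate $\bm x_{M_k}$.

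Next I would exploit the Lipschitz continuity of $\tilde{f}$, implied jointly by Assumptions \ref{as:bounded} and \ref{as:lips}. Any $\bm x \in \mathcal{X}_{M_k}$ that is feasible for $(P_{samp}^{M_k})$ is, by construction, feasible for the global sampled problem, so $\tilde{f}(\bm x) \geq z^*_{samp}$. At the same time $\|\bm x - \bm x^*_{samp}\| \leq \operatorname{diam}(\mathcal{X}_{M_k})$, and Lipschitz continuity with constant $L$ yields the two-sided bound
\begin{equation*}
z^*_{samp} \;\leq\; \tilde{f}(\bm x_{M_k}) \;\leq\; z^*_{samp} + L\operatorname{diam}(\mathcal{X}_{M_k}).
\end{equation*}
Letting $k \to \infty$ therefore forces $\tilde{f}(\bm x_{M_k}) \to z^*_{samp}$. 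Finally, I would close the argument by observing that the incumbent upper bound $U_k$ maintained by the RsBB tree satisfies $z^*_{samp} \leq U_k \leq \tilde{f}(\bm x_{M_k})$ as soon as $M_k$ has been processed, so $U_k \to z^*_{samp}$, which is the desired convergence.

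The main obstacle is pinning down precisely what "convergent local solver" delivers: in general such a solver only certifies a KKT point of $(P_{samp}^{M_k})$, not a globally optimal one, so one cannot hope to match $z^*_{samp}$ at a finite node. The proof sidesteps this by using the shrinking-box argument -- once $\operatorname{diam}(\mathcal{X}_{M_k})$ is small enough, every feasible point of the node problem is automatically within $L\operatorname{diam}(\mathcal{X}_{M_k})$ of $z^*_{samp}$ in objective value, so whichever feasible iterate the solver produces yields a usable upper bound. This is precisely where Assumption \ref{as:lips} (bilinearity of $\tilde{f}$) and the compactness in Assumption \ref{as:bounded} do the heavy lifting: they supply both the Lipschitz estimate and the standard regularity conditions under which the local solver is well-posed on each node.
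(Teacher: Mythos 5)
Your proposal is correct and follows essentially the same route as the paper's (much terser) proof: both rely on the convergence of the local solver together with the exhaustive, bound-improving partition of the compact domain, and your shrinking-box argument with the Lipschitz bound $z^*_{samp}\leq \tilde{f}(\bm x_{M_k})\leq z^*_{samp}+L\,\mathrm{diam}(\mathcal{X}_{M_k})$ simply makes explicit what the paper asserts in one line. The only point you share with the paper in glossing over is that the nested nodes containing $\bm x^*_{samp}$ must actually be selected and not fathomed, but this is handled by the bound-improving selection rule invoked in both arguments.
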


\begin{proof}
 The local solver is assumed to be convergent under Assumption \ref{as:lips} for regularity conditions on the objective function and constraints. Then for an exhaustive and bound improving partitioning of the compact variable domain the sequence of upper bounding operations is convergent $\underset{k\rightarrow \infty}{\lim}\alpha(M_{k})\rightarrow z^*_{samp}$.
\end{proof}

\begin{theorem}\label{lem:bb_conv}
The branch-and-bound procedure of the RsBB algorithm is finitely convergent to $\epsilon$-optimality.
\end{theorem}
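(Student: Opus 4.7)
The plan is to obtain finite $\epsilon$-convergence by combining the finite termination of the robust cutting-plane subroutine (Lemma \ref{lem:infe_term}) with the standard convergence theorem for spatial branch-and-bound, exploiting the consistency and convergence results already established for the bounding operations. I would organise the argument on two timescales: an outer one on which the sampled uncertainty set $\hat{\mathcal{U}}$ is enlarged by violated cuts discovered by the Infeasibility Test, and an inner one on which the sBB tree is explored for a frozen $\hat{\mathcal{U}}$.

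\textbf{Outer finiteness.} First, I would show that the total number of cuts ever generated during the full run is bounded. Each invocation of the Infeasibility Test terminates after at most $C\le(RL/\delta+1)^{n_x}$ steps by Lemma \ref{lem:infe_term}. Because every appended sample $\bm u^*$ satisfies $g_i(\bm x^*,\bm u^*)>\delta$ and all such samples live in the common Lipschitz environment provided by Assumptions \ref{as:bounded}, \ref{as:lips}, and \ref{as:gaff}, the aggregate Mutapcic covering bound applies across the whole tree: the union of augmentations is finite. Hence there exists an outer iteration after which $\hat{\mathcal{U}}$ stabilises and no further cuts are appended.

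\textbf{Inner convergence and main obstacle.} For the terminal sample set, Lemmas \ref{lem:const}, \ref{lem:conv}, and \ref{lem:UBconv} yield consistency of the McCormick lower bound and convergence of both bounding sequences to $z^*_{samp}$. Combining exhaustive bisection and bound-improving best-first selection with these properties, Theorem IV.3 of \citet{Horst1996} gives $\lim_{k\to\infty}(\alpha(M_k)-\beta(M_k))=0$, so the standard $\epsilon$-fathoming rule closes the gap after finitely many node expansions. Proposition \ref{pro:opt} then lifts this sampled optimum to $z^*_{rob}$. The delicate point, and the main obstacle, is the interaction between the two timescales: each augmentation of $\hat{\mathcal{U}}$ forces re-evaluation of the lower bounds at all open nodes, which at first glance appears to reset the inner convergence. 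I would handle this by a monotonicity argument, namely that appending a cut can only tighten $\tilde{P}_{samp}^n$ and hence only weakly raises existing lower bounds, while the bisection partition already in place is preserved. Thus the inner convergence analysis resumes rather than restarts after each outer update, and combined with the bounded outer count it delivers the claimed finite $\epsilon$-convergence of RsBB.
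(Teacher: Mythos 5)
Your argument is correct, but it is organised quite differently from the paper's. The paper's proof of this theorem is a two-line composition: Lemmas \ref{lem:conv} and \ref{lem:UBconv} give $\lim_{k\to\infty}\beta(M_k)=\lim_{k\to\infty}\alpha(M_k)=z^*_{samp}$, hence some node $M_{k^*}$ exists beyond which $\alpha_k-\beta_k<\epsilon$; the finiteness of cut generation is not treated here at all, but is deferred to the final theorem, where Lemma \ref{lem:infe_term} (finite termination of the Infeasibility Test) is combined with this result to conclude overall finite termination. You instead fold both timescales into a single argument: an outer finiteness claim that the aggregate number of cuts over the whole tree is bounded by the Mutapcic covering bound (legitimate, since $\hat{\mathcal{U}}$ is shared by all nodes, so any later incumbent triggering a new cut is $\delta/L$-separated from earlier cut-generating iterates in the common compact domain), and an inner sBB convergence argument for the stabilised sample set via exhaustive bisection, bound-improving selection, consistency of the McCormick bounds, and Horst--Tuy Theorem IV.3. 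Your treatment of the interaction between cut augmentation and the open nodes (cuts only tighten $\tilde{P}^n_{samp}$, so lower bounds weakly increase and the existing partition is preserved, while earlier incumbents remain valid because they were certified robust before being accepted) addresses a subtlety the paper's proof glosses over, namely that $z^*_{samp}$ itself changes when $\hat{\mathcal{U}}$ is augmented; this makes your version more self-contained, at the cost of duplicating work that the paper delegates to its final termination theorem. The appeal to Proposition \ref{pro:opt} at the end is superfluous for this particular statement, which concerns only $\epsilon$-convergence of the branch-and-bound procedure, not robust optimality.
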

\begin{proof}
    By Lemmas \ref{lem:conv} and \ref{lem:UBconv} $\underset{k\rightarrow \infty}{\lim}\beta(M_{k})= \underset{k\rightarrow \infty}{\lim}\alpha(M_{k})=z^*_{samp}$, hence there exists a node $M_{k^*}$ such that $\alpha_k-\beta_k<\epsilon ,\quad k>k^*$.
\end{proof}

\begin{proposition}\label{lem:rob_opt}
The optimal solution obtained by the RsBB algorithm is also optimal for the robust problem.
\end{proposition}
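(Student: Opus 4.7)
The plan is to combine Theorem~\ref{lem:bb_conv} (the RsBB branch-and-bound converges to an $\epsilon$-optimal solution of the sampled problem with respect to whatever $\hat{\mathcal{U}}$ is present at termination) with Proposition~\ref{pro:opt} (an optimal solution of the sampled problem that is robust feasible is robust optimal). The bridge between the two is the Infeasibility Test of Algorithm~\ref{alg:InfesTest}, whose finite termination is guaranteed by Lemma~\ref{lem:infe_term}.

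First, I would let $\bm x^*$ denote the incumbent returned by the RsBB algorithm and $\hat{\mathcal{U}}=\bigcup_i \hat{\mathcal{U}}_i$ the sampled uncertainty set at termination. By Theorem~\ref{lem:bb_conv} applied with this final $\hat{\mathcal{U}}$, the branch-and-bound tree closes with $\alpha_k-\beta_k<\epsilon$ and $\bm x^*$ is an $\epsilon$-optimal solution of \eqref{eq:samp}, so $\tilde f(\bm x^*)\leq z^*_{samp}+\epsilon$.

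Second, I would show that $\bm x^*$ is robust feasible. By the design of the algorithm, the incumbent is only accepted after passing the Infeasibility Test, which, by Lemma~\ref{lem:infe_term}, terminates in finitely many iterations. While the test runs, every violation $g_i(\bm x^*,\bm u^*)>\delta$ triggers the addition of $\bm u^*$ to $\hat{\mathcal{U}}_i$, the re-evaluation of the open nodes under the enlarged sampled set, and a continuation of the BB search rather than termination. Consequently, at the termination point no violation remains, i.e.\ $G_i(\bm x^*)\leq\delta$ for every $i$, which within tolerance $\delta$ certifies $\bm x^*\in\mathcal{F}_{rob}$. Applying Proposition~\ref{pro:opt} then yields $\tilde f(\bm x^*)=z^*_{rob}$ up to the combined $(\epsilon,\delta)$ tolerances.

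The main obstacle I anticipate is the bookkeeping associated with the cutting-plane updates performed inside the BB loop. When a new sample $\bm u^*$ is appended and the open nodes are re-evaluated, one must verify that the resulting lower bounds still dominate the true sampled optimum, that previously fathomed nodes stay fathomed (since cuts only shrink the feasible region), and that the consistency and bound-improving properties established in Lemmas~\ref{lem:const}--\ref{lem:UBconv} persist with the augmented $\hat{\mathcal{U}}$. Making this argument precise—so that Theorem~\ref{lem:bb_conv} can legitimately be invoked at the final configuration of the tree and sampled set—is the delicate part; everything else reduces to stitching the already-proven propositions together.
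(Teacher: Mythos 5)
Your proposal is correct and follows essentially the same route as the paper: the paper likewise argues that every incumbent (any node with $z_k \leq z^{BF}$) must pass the Infeasibility Test, so the final solution is robust feasible, and then concludes robust optimality by invoking Proposition~\ref{pro:opt} together with the sampled-problem optimality guaranteed by the branch-and-bound convergence. Your version is somewhat more explicit about the $(\epsilon,\delta)$ tolerances and the bookkeeping when $\hat{\mathcal{U}}$ is augmented mid-search, points the paper's one-paragraph proof passes over silently, but the underlying argument is the same.
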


\begin{proof}
Let $z_k$ denote the objective value of the sampled problem at node $M_{k}$, and let $z^{BF}$  be the best feasible objective value found by the RsBB algorithm up to this node. Let  $\bm x^*_k $ be the corresponding optimal solution for the sampled problem at node $M_{k}$. By construction, RsBB performs a robustness evaluation for all nodes where $ z_k \leq z^{BF} $. This ensures that any solution $\bm x^*_k: z_k \leq z^{BF}$ is also feasible for the robust problem. Hence, the final optimal solution $\bm x^*$ obtained by RsBB is necessarily feasible for the robust problem. By Proposition \ref{pro:opt}, the RsBB solution $\bm x^*$ is also optimal for the robust problem.
\end{proof}

\begin{theorem}
    The RsBB algorithm terminates in a finite number of iterations at a robust optimal solution.
\end{theorem}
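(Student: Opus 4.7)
The plan is to combine the two finiteness results already established, namely Theorem \ref{lem:bb_conv} for the outer branch-and-bound loop and Lemma \ref{lem:infe_term} for the inner robustness check, and then invoke Proposition \ref{lem:rob_opt} to upgrade the returned $\epsilon$-optimal sampled solution to a robust optimum. Concretely, I would first describe the RsBB procedure as a doubly nested iteration: the outer loop explores nodes $\{M_k\}$ of the spatial branch-and-bound tree, and at any node whose upper-bound value improves on the incumbent, the inner Infeasibility Test of Algorithm \ref{alg:InfesTest} is invoked to certify or refute robustness.

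For finiteness, the argument would proceed in two steps. First, by Lemma \ref{lem:infe_term}, each call of the Infeasibility Test terminates after at most $C \le (RL/\delta+1)^{n_x}$ iterations, so only finitely many violating uncertainty samples can ever be appended to $\hat{\mathcal{U}}_i$ at a single node, and hence across the whole tree as well (since $\mathcal{X}$ is compact and $\mathcal{U}$ yields a Lipschitz LLP). Second, by Theorem \ref{lem:bb_conv}, the outer branch-and-bound procedure is finitely convergent to $\epsilon$-optimality for the sampled problem induced by the current $\hat{\mathcal{U}}_i$. I would then point out that any update of $\hat{\mathcal{U}}_i$ occurs only finitely many times (by the first step), each such update only \emph{tightens} the sampled problem (its feasible set can only shrink, by Proposition \ref{pr:bound}), so the lower bounds stored in waiting nodes may need to be re-evaluated but remain valid underestimators of the robust optimum. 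Combining the finite number of uncertainty-set updates with the finite number of branch-and-bound nodes per sampled problem, the overall process consists of finitely many nested finite operations and hence terminates.

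For robust optimality, I would invoke Proposition \ref{lem:rob_opt} directly: upon termination, the incumbent $\bm x^{*}$ corresponds to a node whose upper bound dominates the whole tree to within $\epsilon$, and by construction of RsBB every candidate that improved the incumbent passed the Infeasibility Test, so $G_i(\bm x^{*}) \le \delta$ for all $i$. Hence $\bm x^{*}$ lies in $\mathcal{F}_{rob}$, and by Proposition \ref{pro:opt} the corresponding objective value equals $z^{*}_{rob}$ up to the prescribed tolerance.

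The main obstacle I expect is the bookkeeping around the shared, dynamically growing sample sets $\hat{\mathcal{U}}_i$ across waiting nodes: one must argue that reinjecting new cuts into already-processed or still-waiting nodes does not create an infinite regress of re-evaluations. My intended remedy is to observe that each cut strictly separates a previously accepted point from the new $\hat{\mathcal{U}}_i$, so the sequence of distinct sampled problems solved in the course of RsBB is finite, and within each such sampled problem Theorem \ref{lem:bb_conv} applies verbatim; the total number of outer--inner iterations is therefore bounded, which closes the proof.
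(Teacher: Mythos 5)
Your proposal is correct and follows essentially the same route as the paper's own proof: combine the finite termination of the Infeasibility Test (Lemma \ref{lem:infe_term}) with the finite $\epsilon$-convergence of the branch-and-bound procedure (Theorem \ref{lem:bb_conv}), then invoke Proposition \ref{lem:rob_opt} to conclude robust optimality of the incumbent. Your additional bookkeeping about the finitely many distinct sampled problems and the re-evaluation of waiting nodes is a more explicit justification of a step the paper leaves implicit, but it does not change the argument.
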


\begin{proof}
    The Infeasibility Test algorithm has a finite termination by Lemma \ref{lem:infe_term} and the branch-and-bound procedure of the RsBB algorithm is finitely convergent by Theorem \ref{lem:bb_conv}, hence the RsBB algorithm terminates in a finite number of iterations. By Proposition \ref{lem:rob_opt} the obtained solution is optimal for the robust problem.
\end{proof}

\begin{algorithm}
\caption{Infeasibility Test Algorithm}
\label{alg:InfesTest}
\begin{algorithmic}[1] 
    \Require Local solution of $(P_{samp}^n)$ at node $n$, i.e., $\bm x^*$
    \Require Feasibility tolerance $\delta>0$
    \Require Subset of sampled uncertain parameters $\hat{\mathcal{U}_i}$
    \Ensure Updated set of sampled uncertain parameters $\hat{\mathcal{U}}_i'$
    \For{$i=1,\cdots,m$}
    \State Solve \eqref{eq:maxg} $\rightarrow u^*$
    \If {$g_i(\bm x^*,\bm u^*) > \delta$}
        \State Add new sample to $\hat{\mathcal{U}}_i \leftarrow \hat{\mathcal{U}}_i \cup \{\bm u^*\}$
    \EndIf
    \EndFor
\If {$\underset{i=1,\cdots,m}{\max} g_i(\bm u^*, \bm x^*) > \delta$}
       \State Set $\hat{\mathcal{U}}_i'= \hat{\mathcal{U}}_i$
        \State Solve $(P_{samp}^n) \rightarrow \bm x^*$
        \State Go to Step 1
    \EndIf
    \State \textbf{Terminate:} Robust node, continue to BB
\end{algorithmic}
\end{algorithm}

\begin{algorithm}
\caption{The RsBB Algorithm}
\label{alg:RSBB}
\begin{algorithmic}[1] 
\Require Numeric tolerance $tol>0$ and fathoming tolerance $\epsilon>0$
    \Require Set of waiting nodes $\mathcal{W} := \{0\}$, set of current nodes $\mathcal{C} := \emptyset$
    \Require Set of uncertainty samples $\hat{\mathcal{U}}= \underset{i=1,\cdots,m}{\bigcup} \hat{\mathcal{U}_i}=\underset{i=1,\cdots,m}{\bigcup}  \{ u_{i,0}\}
   $
    \State Initialize variable bounds $\underline{\bm x},\overline{\bm x}$
    \State Initialize best possible solution $z^{BP} \gets \infty$ and best found solution $z^{BF} \gets \infty$

    \While{$\mathcal{W} \neq \emptyset$}
        \State $z^{BP} \gets \underset{n\in \mathcal{W}}{\min}{\tilde{z}_n}$
        \State Reset $\mathcal{C} \gets \emptyset$
        \For{$n \in \mathcal{W}$}
            \If{$\tilde{z}_n - z^{BP}\leq tol$}
                \State Add current node to $\mathcal{C} \gets \mathcal{C} \cup \{n\}$
            \EndIf
        \EndFor
        
        \For{$n \in \mathcal{C}$}
            \State Remove $n$ from waiting nodes $\mathcal{W} \gets \mathcal{W} \setminus \{n\}$
            \State Solve $P_{samp}^n \rightarrow z_n$
            
            \If{$z_n \leq z^{BF}$}
                \State Perform Infeasibility Test (Algorithm \ref{alg:InfesTest}) $\rightarrow \hat{\mathcal{U}}'$
                \If{New violations detected: $|\hat{\mathcal{U}}'| > |\hat{\mathcal{U}}|$}
                    \State Update $\hat{\mathcal{U}} \gets \hat{\mathcal{U}}'$
                    \State Solve $\tilde{P}_{samp}^n,~ \forall n \in \mathcal{W} \rightarrow \tilde{z}_n $
                \EndIf
                \State Set $z^{BF} \gets \min\{z_n, z^{BF}\}$
                \State Solve $\tilde{P}^n_{samp} \rightarrow \tilde{z}_n$
                \State Select branching variable
                \State Generate child nodes $n', n''$
                \State Append $\mathcal{W} \gets \mathcal{W} \cup \{n',n''\}$
                
                \For{$w \in \mathcal{W}$}
                    \If{$\tilde{z}_w(1 - \epsilon) \geq z^{BF}$}
                        \State Fathom node $\mathcal{W} \gets \mathcal{W} \setminus \{w\}$
                    \EndIf
                \EndFor
            \EndIf
        \EndFor
    \EndWhile

    \State \textbf{Terminate:} Robust optimal solution obtained at $z^{BF}$
\end{algorithmic}
\end{algorithm}

\subsection{Illustrative example}
The implementation of the RsBB algorithm is demonstrated in a non-convex QCQP toy problem $(P_{nom,toy})$ defined by Eqs \eqref{eq:toy-start}-\eqref{eq:toy-end}. We consider box uncertainty for $u\in \mathcal{U}=[2,6]$ around the nominal value $u_0=4$. The corresponding relaxed problem using McCormick envelopes will be denoted as $(\tilde{P}_{toy})$. The feasible regions for the toy nominal, sampled and robust problems are depicted in Fig. \ref{fig:feas_reg}. The robust feasible region corresponds to a set of equations when no more feasibility violations are detected, while the sampled region corresponds to intermediate robust cutting plane iterations.
\begin{align}
        \min-2x_1x_2\label{eq:toy-start}\\
        \text{s.t.}~ u_0 x_1x_2+2x_1+2x_2\leq 3\label{eq:toy_unc}\\
        0.1 -(x_1-0.5)^2 -(x_2-0.5)^2\leq 0\\
        x_2-0.09x_2-0.5 \leq 0\\
        \textbf{x}\in \mathcal{X}=[0,1]^2\label{eq:toy-end}
\end{align}
The contour plots for the objective value in the feasible regions are displayed in Figure \ref{fig:toy_contour} which also entails the obtained solutions of the examined nodes in rhombuses. For the root node and node 1 a trivial solution was found by the local solver hence no robust cuts were added to the problem. Node 2 obtained an optimal solution for the nominal problem resulting in an infeasibility cut. The robust optimal solution was obtained on the same node for a tighter sampled feasible region. The descendand nodes are not depicted on the figure but can be found in  the detailed RsBB tree representation of Figure \ref{fig:toy_tree}. The implementation of the RsBB algorithm is the following:\\
\textbf{Step 0:}
Initialize the sampled uncertainty set $\hat{\mathcal{U}}=\{4\}$ with the nominal uncertainty value. Solve $(P_{nom,toy})$ with local solver. The obtained solution at root node is $z_0=0$ which is set as the best found $z^{BF}=z^*=0$ [Step 2, Algorithm \ref{alg:RSBB}]  The solution trivially satisfies the uncertain constraint in Eq.\eqref{eq:toy_unc} hence no cuts are added [Steps 14, 15, Algorithm \ref{alg:RSBB}].\\
\textbf{Step 1:}
Solve the relaxed problem $(\tilde{P}_{samp,toy})$ and generate two new (1,2) child nodes based on maximum approximation error [Steps 21-24, Algorithm \ref{alg:RSBB}].\\
\textbf{Step 2:}
Select new node as $n=1$ based on lowest-lower bound $\tilde{z}_1<\tilde{z}_2$ [Step 4, Algorithm \ref{alg:RSBB}].\\
\textbf{Step 3:}
Solve $(P_{samp,toy})$ with local solver. The obtained optimal value is $z_1=0$ [Steps 12, 13, Algorithm \ref{alg:RSBB}]. The Infeasibility Test is trivial for this node as well [Steps 14, 15, Algorithm \ref{alg:RSBB}].\\
\textbf{Step 4:}
Solve the relaxed problem $(\tilde{P}_{samp,toy})$ and generate two new (3, 4) child nodes based on maximum approximation error[Steps 21-24, Algorithm \ref{alg:RSBB}]. None of the waiting nodes is fathomed.\\
\textbf{Step 5:}
Select new node as $n=2$ based on lowest-lower bound $\tilde{z}_2<\tilde{z}_3<\tilde{z}_4$ [Steps 4-10, Algorithm \ref{alg:RSBB}].\\
\textbf{Step 6:}
Solve $(P_{samp,toy})$ with local solver [Steps 12, 13, Algorithm \ref{alg:RSBB}]. The obtained optimal value is $z_2=-0.45$.\\
\textbf{Step 7:}
Perform the Infeasibility Test [Steps 14, 15, Algorithm \ref{alg:RSBB}]. The sampled point $u^*=6$ was found to violate Eq. \eqref{eq:toy_unc} and was appended in $\hat{\mathcal{U}}\cup \{6\}$ [Steps 1-8, Algorithm \ref{alg:InfesTest}].\\
\textbf{Step 8:}
Solve $(P_{samp,toy})$ with local solver and the new feasible region [Steps 9, Algorithm \ref{alg:InfesTest}]. The obtained optimal value is $z_2=-0.36$ [Step 12 Algorithm \ref{alg:InfesTest}].\\
\textbf{Step 9:} 
Update best found solution as $z^*_2=-0.36$ [Step 20, Algorithm \ref{alg:RSBB}].\\
\textbf{Step 10:}
Solve the relaxed problem $(\tilde{P}_{samp,toy})$ and generate two new (5,6) child nodes based on maximum approximation error [Steps 21-24, Algorithm \ref{alg:RSBB}].\\
\textbf{Step 11:}
Fathom nodes for which $\tilde{z}_n(1-\epsilon)>z^*$. Nodes 4 and 6 are fathomed [Steps 25-29, Algorithm \ref{alg:RSBB}].\\
\textbf{Step 12:}
Select new node based on lowest-lower bound [Steps 4-10, Algorithm \ref{alg:RSBB}].\\
Steps 8 to 12 are repeated for the remaining nodes, the corresponding solutions for each node can be found in Figure \ref{fig:toy_tree}. The RsBB terminates in node 14 and the robust optimal solution $z^*=-0.36$.

\begin{figure}[H]
    \centering
    \includegraphics[width=0.4\linewidth]{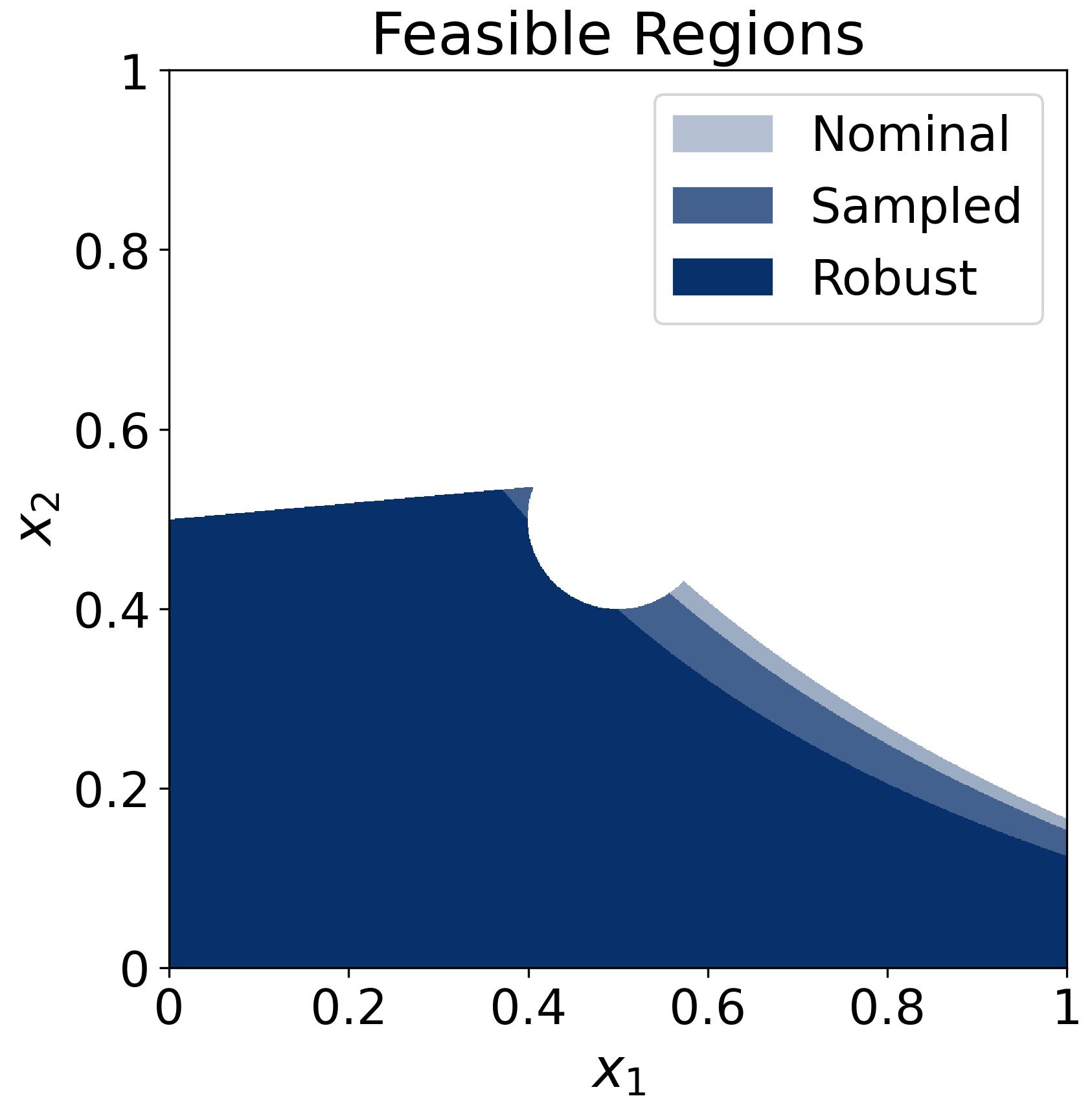}
    \caption{Feasible regions for nominal sampled and robust toy problems}
    \label{fig:feas_reg}
\end{figure}

\begin{figure}[H]
    \centering
    \includegraphics[width=\linewidth]{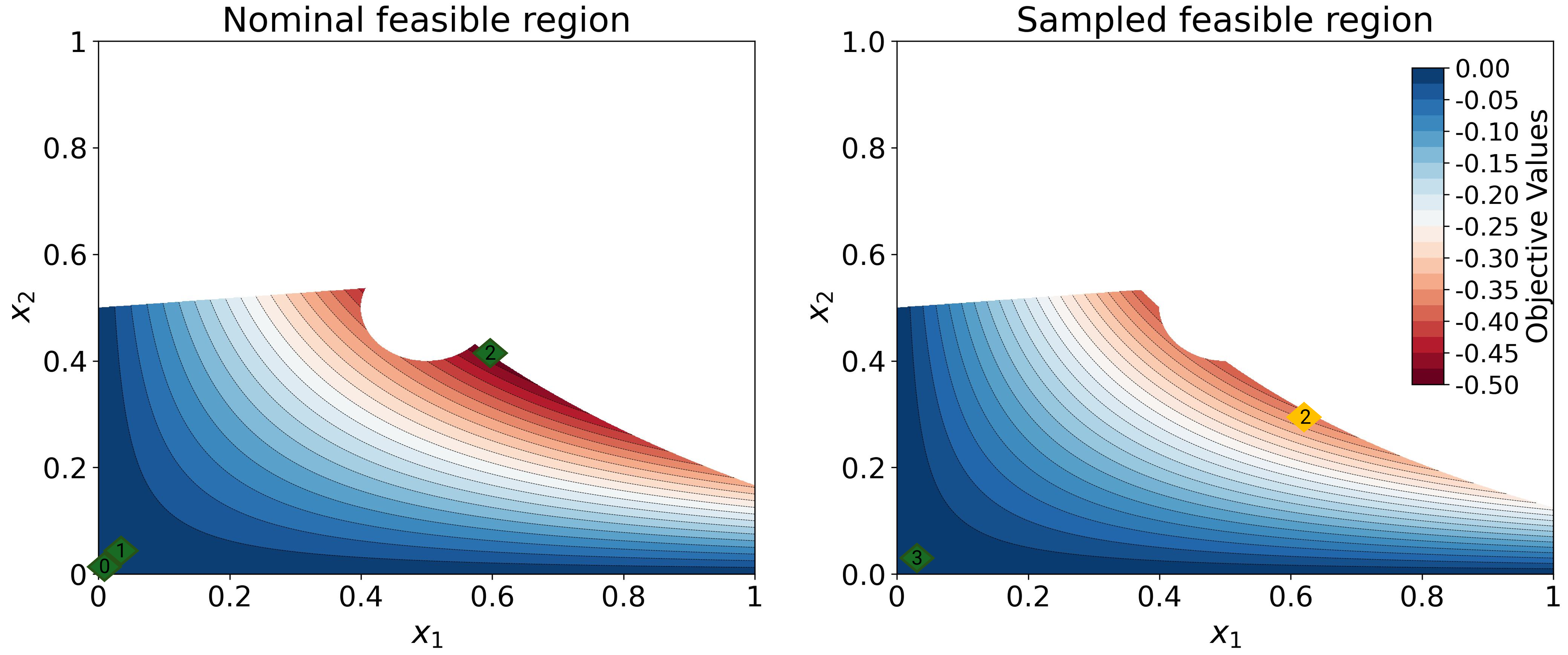}
    \caption{Contour plot for toy problem over the nominal and sampled feasible regions. Rhombuses correspond to objective values found at denoted nodes.}
    \label{fig:toy_contour}
\end{figure}

\begin{figure}
\resizebox{\columnwidth}{!}{%
\begin{tikzpicture}[
    every node/.style={circle, draw=black,minimum size=7mm},
    level 1/.style={sibling distance=60mm},
    level 2/.style={sibling distance=40mm},
    level 3/.style={sibling distance=30mm},crossed/.style={  append after command={
    \pgfextra{\draw[-]
      (\tikzlastnode.south west) -- (\tikzlastnode.north east)
      (\tikzlastnode.north west) -- (\tikzlastnode.south east);}
  }
}
]
\node[label=left:{\begin{tabular}{l}   $z_0^*=0$ \\    $\tilde{z}_0=-0.5$ \end{tabular}},label=right:{$\hat{\mathcal{U}}=\{4\}$}] {0}
  child[->] { node[label=left:{\begin{tabular}{l}    $z_1^*=0$\\    $\tilde{z}_1=-0.5$ \end{tabular}}] {1}
    child[->] { node[very thick,label=left:{\begin{tabular}{l}    $z_3=0$ \\    $\tilde{z}_3=-0.39$ \end{tabular}}] {3}
      child[->] { node[dashed,very thick,label=left:{$\tilde{z}_{11}=-0.22$}] {11} }
      child[->] { node[dashed,very thick,label=left:{$\tilde{z}_{12}=-0.33$}] {12} }
    }
    child[->] { node[dashed,very thick,label=left:{$\tilde{z}_4=-0.35$}] {4} }
  }
  child[->] { node[draw=teal,very thick] (n2) {2}
    child [->]{ node[very thick,label=right:{\begin{tabular}{l}    $z_5^*=-0.36$ \\    $\tilde{z}_5=-0.40$ \end{tabular}}] {5}
    child [->]{ node[dashed,very thick,label=left:{$\tilde{z}_7=-0.35$}] {7} }
    child [->]{ node[very thick,label=right:{\begin{tabular}{l}    $z_8^*=-0.36$ \\    $\tilde{z}_8=-0.40$\end{tabular}}] {8} 
    child [->]{ node [very thick,label=left:{\begin{tabular}{l}    $z_9^*=-0.36$ \\    $\tilde{z}_9=-0.38$ \end{tabular}}] {9}
    child [->]{ node[dashed,very thick,label=left:{$\tilde{z}_{13}=:$ infeas}] {13} }
    child [->]{ node [very thick,label=right:{\begin{tabular}{l}    $z_{14}^*=-0.36$ \\    $\tilde{z}_{14}=-0.36$ \end{tabular}}] {14} }}
    child [->]{ node[dashed,very thick,label=right:{$\tilde{z}_{10}=:$ infeas}] {10} }}}
    child [->]{ node[dashed,very thick,label=right:{$\tilde{z}_6=:$ infeas}] {6} }
  };
\node[above=-15mm of n2,xshift=15mm, draw=none, fill=none, inner sep=0pt] (anno1){$z_2=-0.45$, feasibility violation};
\node[right=5mm of n2, draw=none, fill=none, inner sep=0pt] (anno2) { \begin{tabular}{l}$z_2^*=-0.36$\\$\tilde{z}_2=-0.38$\end{tabular}};
\draw[->] (anno1.east) to [out=0,in=0]  node[midway,draw=none, fill=none, right] {$\hat{\mathcal{U}}\leftarrow \hat{\mathcal{U}} \cup \{6\}$}  (anno2.east);
\end{tikzpicture}
}
    \caption{RsBB tree at termination for the toy problem. We denote as $z_n$ and $\tilde{z}_n$ the solutions of $P_{samp,toy}$ and $\tilde{P}_{samp,toy}$ at node $n$. We note as $infeas$ the nodes for which $\tilde{P}_{samp,toy}$ was found infeasible. Nodes with dashed lines correspond to fathomed nodes. Nodes with thin lines ($n=0,1$) are evaluated for the original sampled uncertainty set $\hat{\mathcal{U}}=\{4\}$. At node 2 a feasibility violation is detected and the sampled uncertainty set is updated accordingly to $\hat{\mathcal{U}}=\{4,6\}$. The remaining nodes with thick lines ($n=3,\cdots,14$) are evaluated for the updated sampled uncertainty set.}
    \label{fig:toy_tree}
\end{figure}

\section{Problem description}\label{prob}
For the purpose of this study, we evaluate the performance of RsBB for the pooling problems. We use the pq-formulation of the pooling problem \citep{Sahinidis2005}, resulting in a quadratically constrained quadratic program (QCQP). Bilinear terms are manifested both in the objective function and the quality constraints.
\subsection{The pooling problem}
In the standard pooling formulation, given an existing infrastructure, the optimal flows  between input streams, mixing pools and output product streams need to be determined. The objective is to minimize the cost of the process under storage capacity, demand satisfaction and product quality constraints. \citet{Bental94} proposed the q-formulation of the pooling problem in order to derive smaller in size dual problems. Later on, \citet{Sahinidis2005} proposed an alternative pq-formulation via the use of two RLT constraints which improved the performance of the state-of-the-art solvers at that time, while not changing the problem structure. We introduce the auxiliary variables $v_{ilj}$ to replace the bilinear terms. Let $\Pi_{nom}$ be the original non-convex pooling problem of cost minimization:
\begin{align}
{\text{min}}~\sum_{ilj}c_iv_{ilj} -\sum_{j}d_j\sum_{l}y_{lj}+\sum_{j}d_j\sum_{i}z_{ij}\label{eq:obj}\\
\text{s.t.}~ \sum_{lj}v_{ilj}+ \sum_j z_{ij} \leq A_i, \quad \forall i \\
\sum_j y_{lj} \leq S_l, \quad \forall l\\
\sum_l y_{lj}+\sum_i z_{ij}\leq D_j, \quad \forall j\\
\sum_i v_{ilj}=y_{lj}, \quad \forall l\\
\sum_l y_{lj}+ \sum_i z_{ij}=f_j, \quad \forall j\label{eq:flow}\\
P_{jk}^L f_j\leq \sum_{il}C_{ik} v_{ilj}+ \sum_i C_{ik} z_{ij}\leq P_{jk}^U f_j, \quad \forall j,k \label{eq:unc_quality}\\
v_{ilj}=q_{il}y_{l,j}~\forall i,l,j\label{eq:aux}
\end{align}  
To derive the convex relaxation $\tilde{\Pi}_{nom}$ of the pooling problem $\Pi_{nom}$, Eq. \eqref{eq:aux} is replaced by the equivalent McCormick envelopes Eqs.\eqref{eq:mc1}-\eqref{eq:mc4} \cite{McCormick1976}.
\begin{align}
v_{ilj} \geq \underline{q}_{il}y_{lj}+q_{il}\underline{y}_{lj} -\underline{q}_{il} \underline{y}_{lj},\quad \forall i,l,j\label{eq:mc1}\\
v_{ilj} \leq \overline{q}_{il}y_{lj}+q_{il}\underline{y}_{lj} -\overline{q}_{il} \underline{y}_{lj},\quad \forall i,l,j \\
v_{ilj} \geq \overline{q}_{il}y_{lj}+q_{il}\overline{y}_{lj} -\overline{q}_{il} \overline{y}_{lj},\quad \forall i,l,j \\
v_{ilj} \leq \underline{q}_{il}y_{lj}+q_{il}\overline{y}_{lj} -\underline{q}_{il} \overline{y}_{lj},\quad \forall i,l,j\label{eq:mc4}
\end{align}  
In the pooling problem, uncertainty can be manifested in any of the model parameters, such as the feed component cost, product price and feed component quality. For this study we consider uncertainty in the feed component quality, i.e. $C_{ik}$ in Eq. \eqref{eq:unc_quality}. Following the notation in \cite{Li2011}, the true value of the uncertain level quality  can be formulated as follows:
\begin{equation}\label{eq:unc-def}
\tilde{C}_{ik}=C_{ik}+\xi_{ik} \hat{C}_{ik}    
\end{equation}
where  $C_{ik}$ represents the nominal value, $\hat{C}_{ik}$ the constant perturbation (which is positive) and $\xi_{ik}$ is a vector of random variables which are subject to uncertainty. Given a selected uncertainty set $\mathcal{U}_{p}$, the  deterministic quality constraints in Eq. \eqref{eq:unc_quality} are replaced by their robust formulations in Eqs.\eqref{eq:rob-qual-up} and\eqref{eq:rob-qual-lo} resulting in the SIP problems  $\Pi_{rob},~\tilde{\Pi}_{rob}$ \citep{Bental02RO}.
\begin{align}
 \sum_{il}\tilde{C}_{ik} v_{ilj}+ \sum_i \tilde{C}_{ik} z_{ij}\leq P_{jk}^U f_j, \quad \forall \boldsymbol{\xi}\in \mathcal{U}_{p},~ \forall j,k \label{eq:rob-qual-up}\\ 
 P_{jk}^L f_j\leq \sum_{il}\tilde{C}_{ik} v_{ilj}+ \sum_i \tilde{C}_{ik} z_{ij}, \quad \forall \boldsymbol{\xi}\in \mathcal{U}_{p},~ \forall j,k \label{eq:rob-qual-lo}
\end{align}
\subsection{Uncertainty set modelling}
In this study we consider the case where $\mathcal{U}_{p}$ is a convex $p$-normed uncertainty set with $p\in \{\infty,2,1\}$, i.e. box, ellipsoidal and polyhedral uncertainty. \citet{Soyster73} was the first to study linear problems under box uncertainty using the dual reformulation method, resulting in the most conservative solution, i.e. worst-case solution.
Later on, \citet{Bental00} aiming at a less conservative approach, proposed the ellipsoidal uncertainty set defined by the $2$-norm. Despite the fact that the ellipsoidal set reduces the conservatism of the problem, it introduces extra nonlinear terms. To this end, \citet{Bertsimas04} proposed an alternative formulation assuming that it it unlikely that all uncertain parameters would be affected by the maximum perturbation. The corresponding polyhedral set is defined by the $1$-norm. Given that Eqs. \eqref{eq:rob-qual-up} and \eqref{eq:rob-qual-lo} are linear in $\xi_{ik}$ both dual reformulation and robust cutting planes can be used to address the robust problem $\Pi_{rob}$. The deterministic dual reformulations of Eqs. \eqref{eq:rob-qual-up} and\eqref{eq:rob-qual-lo} derived in Eqs. \eqref{eq:dual-up} and\eqref{eq:dual-lo} using the $\rho_{jk}$ according to the selected uncertainty set as displayed in Table \ref{tab:dual}.

\begin{table}[h!]
\centering
\caption{Uncertainty set definitions and dual reformulations for the examined uncertainty set types.}
\label{tab:dual}
\begin{tabular}{c c c}
\hline
\textbf{Uncertainty set}  & $\mathcal{U}$ & $\bm{\rho}_{jk}$  \\ \hline 
\noalign{\vskip 0.5em} 
Box & $\{\boldsymbol{\xi}| ~|\xi_{i}|\leq \Psi\,~\forall i\}$&$\Psi\big(\sum_{il}\hat{C}_{ik} q_{il}y_{lj}+ \sum_i \hat{C}_{ik} z_{ij} \big)$  \\ 
\noalign{\vskip 0.5em} 
Ellipsoidal & $\{\boldsymbol{\xi}| ~ \sqrt{\sum_{i}\xi_{i}^2} \leq \Omega \}$& $\Omega\bigg(\sqrt{\sum_{i}\hat{C}^2_{ik}\big( \sum_{l}q_{il}y_{lj}+ z_{ij}\big)^2}\bigg )$  \\ 
\noalign{\vskip 0.5em} 
Polyhedral & $\{\boldsymbol{\xi}| ~ \sum_{i}|\xi_{i}| \leq \Gamma_k \}$&$\Gamma_k\underset{i}{\max}\hat{C}_{ik}(\sum_l q_{il}y_{lj}+z_{ij})$ \\ \hline
\end{tabular}
\end{table}

    \begin{align}
\sum_{il}C_{ik} q_{il}y_{lj}+ \sum_i C_{ik} z_{ij}  - P_{jk}^U v_j + \rho_{jk} \leq 0 \label{eq:dual-up} \\ 
P_{jk}^L v_j- \sum_{il}C_{ik} q_{il}y_{lj}- \sum_i C_{ik} z_{ij} +  \rho_{jk} \leq 0 \label{eq:dual-lo} 
\end{align}

For the robust cutting planes algorithm, consider the sampled quality constraints Eqs. \eqref{eq:samp-qual-up}, \eqref{eq:samp-qual-lo}. The robustness of each $\Pi_{samp}$ solution is evaluated by the infeasibility test problem $(T^{pool}_{jk})$, $\forall j,k $ and both upper and lower quality constraints. For each sampled $\bm{\xi}$ that violates a quality constraint, the sampled uncertainty set $\hat{\mathcal{U}}_p$ is augmented by the corresponding $\bm{\xi}$ value. 

\begin{align}
 \sum_{il}\tilde{C}_{ik} v_{ilj}+ \sum_i \tilde{C}_{ik} z_{ij}\leq P_{jk}^U f_j, \quad \boldsymbol{\xi}\in \hat{\mathcal{U}}_{p},~ \forall j,k \label{eq:samp-qual-up}\\ 
 P_{jk}^L f_j\leq \sum_{il}\tilde{C}_{ik} v_{ilj}+ \sum_i \tilde{C}_{ik} z_{ij}, \quad \boldsymbol{\xi}\in \hat{\mathcal{U}}_{p},~ \forall j,k \label{eq:samp-qual-lo}
\end{align}
\begin{equation}\tag{$T^{pool}_{jk}$}\label{eq:maxg_pool}
    \begin{aligned}
    \begin{rcases}
\max~\sum_{il}\tilde{C}_{ik} v^*_{ilj}+ \sum_i \tilde{C}_{ik} z^*_{ij}- P_{jk}^U f^*_j \\
    \text{s.t.}\quad\tilde{C}_{ik}=C_{ik}+\xi_{ik} \hat{C}_{ik}\\
    ~\boldsymbol{\xi}\in \mathcal{U}_p
    \end{rcases}{\forall j,k}
    \end{aligned}
\end{equation}

 Table \ref{tab:models} summarizes the notation for the different problems and the equations comprising them providing a correspondence to the general problem notation used for the RsBB methodology.

 \begin{table}[h!]
\centering
\caption{Overview of the examined pooling problem  categorized by optimization level. Each problem is defined by its notation and the corresponding set of equations that describe its feasible region.}\label{tab:models}
\begin{tabular}{|c|c|c|c|}
\hline
\textbf{Optimization level} & \textbf{Problem} & \textbf{Notation} & \textbf{Equations} \\ \hline \hline
\multirow{6}{*}{Outer-level} 
& Nominal pooling & $\Pi_{nom}$ & \eqref{eq:obj}-\eqref{eq:aux} \\ 
& Robust pooling & $\Pi_{rob}$ & \eqref{eq:obj}-\eqref{eq:flow}, \eqref{eq:aux}, \eqref{eq:rob-qual-up}, \eqref{eq:rob-qual-lo} \\ 
& Sampled pooling & $\Pi_{samp}$ & \eqref{eq:obj}-\eqref{eq:flow}, \eqref{eq:aux}, \eqref{eq:samp-qual-up}, \eqref{eq:samp-qual-lo}\\ 
& Relaxed nominal pooling& $\tilde{\Pi}_{nom}$ & \eqref{eq:obj}-\eqref{eq:unc_quality}, \eqref{eq:mc1}-\eqref{eq:mc4} \\ 
& Relaxed robust pooling& $\tilde{\Pi}_{rob}$ & \eqref{eq:obj}-\eqref{eq:flow}, \eqref{eq:mc1}-\eqref{eq:mc4}, \eqref{eq:rob-qual-up}, \eqref{eq:rob-qual-lo} \\
& Relaxed sampled pooling& $\tilde{\Pi}_{samp}$ & \eqref{eq:obj}-\eqref{eq:flow}, \eqref{eq:mc1}-\eqref{eq:mc4}, \eqref{eq:samp-qual-up}, \eqref{eq:samp-qual-lo}\\ \hline
\multirow{1}{*}{Inner-level} 
& Quality robustness & $T_{pool}$ & \eqref{eq:maxg_pool} \\ \hline
\end{tabular}
\end{table}

\section{Computational experiments}\label{results}
\subsection{Case studies}
The performance of the RsBB algorithm is evaluated for 10 benchmark pooling instances, details of the problems can be found in Table \ref{tab:problem_stats}. For the uncertain inlet quality, we set $\hat{C}_{ik}=C_{ik}$. The uncertain parameters are defined by box, ellipsoidal and polyhedral uncertainty sets for six different uncertainty set sizes, i.e. $\Psi, ~\Omega,~ \Gamma : =\{0.05,0.1,0.15,0.2,0.25,0.3\}$ as introduced in Table \ref{tab:dual}. The selected modelling environment is Pyomo v6.9.3 \citep{Pyomo} running on Python 3.9.23. For the RsBB algorithm HiGHS v.1.11.0 \citep{highs} is used as the LP solver and conopt v4.35 \citep{Drud1994} as the local NLP solver. The proposed algorithm is compared with PyROS v1.2.9 using Gurobi v12.0.3 \citep{gurobi} as the global solver. The default settings were used for all solvers. The deterministic robust counterparts are evaluated as well using Gurobi v12.0.3  as global solver. We set a time limit of 1 hour for all of the examined methods. The computational experiments were carried on Windows 10 operating system running an Intel Core i9-10900K CPU @ 3.70GHZ processor with 10 cores and 20 logical processors and 32.0 GB RAM. The examined instances were processed serially. 

\begin{table}[htpb]
    \centering
    \caption{Model statistics for examined pooling problems}
    \begin{tabular}{lcccccccc}
        \toprule
        Problem & Feeds & Pools & Products & Qualities & Var. & Eq. QP & Eq. LP & Ref. \\
        \midrule
        haverly1 & 3  & 1  & 2  & 1  & 10  & 17  & 29 &\citet{Haverly1978} \\
        haverly2 & 3  & 1  & 2  & 1  & 13  & 20  & 38 &\citet{Haverly1978} \\
        haverly3 & 3  & 1  & 2  & 1  & 10  & 17  & 29 &\citet{Haverly1978} \\
        bental4  & 4  & 1  & 2  & 1  & 13  & 21  & 39 &\citet{Bental94} \\
        bental5  & 13 & 3  & 5  & 2  & 92  & 121 & 301 &\citet{Bental94} \\
        foulds2  & 6  & 2  & 4  & 1  & 36  & 46  & 94 &\citet{Foulds1992} \\
        adhya1   & 5  & 2  & 4  & 4  & 33  & 62  & 122 &\citet{Adhya1999} \\
        adhya2   & 5  & 2  & 4  & 6  & 33  & 70  & 130 &\citet{Adhya1999} \\
        adhya3   & 8  & 3  & 4  & 6  & 52  & 94  & 190 &\citet{Adhya1999} \\
        adhya4   & 8  & 2  & 5  & 4  & 58  & 95  & 215 &\citet{Adhya1999} \\
        \bottomrule
    \end{tabular}
    \label{tab:problem_stats}
\end{table}

\subsection{Results}
A comparison of the RsBB algorithm with state-of-the-art methods in terms of \% of problems solved within 1 hour of run time can be found in Figure \ref{fig:inst_solved}. Dual-Gurobi corresponds to solving the dual problem, using the dual constraints for the different uncertainty types, with global solver Gurobi. PyROS-Gurobi corresponds to using PyROS solver with Gurobi as a global solver both for the upper and lower-level problems. The RsBB algorithm is using conopt4 for the upper and lower-level problems for box and polyhedral sets and Gurobi for the lower-level problems under ellipsoidal uncertainty. For each of the 10 benchmark problems of Table \ref{tab:problem_stats} 6 different uncertainty sizes are evaluated, resulting in 60 instances in total for each uncertainty type. All three approaches solve the examined instances to robust optimality for box uncertainty set. Dual reformulation and RsBB solve all instances with polyhedral uncertainty, whereas PyROS attains the robust optimal solution for 40\% of the cases. The instances modelled with an ellipsoidal uncertainty set pose a challenge to the examined approaches, with Dual reformulation and PyROS solving 90\% of the instances and RsBB 92\% within the imposed time limit. Details for the instances that were not solved to robust optimality are displayed in Table \ref{tab:problem_stats}. The bental5 and adhya1-4 instances result in a time limit exceeded termination, these instances correspond to the problems with higher dimensionality and complexity. For the ellipsoidal uncertainty set for $\Omega<0.3$ an optimal solution with relative gap less than 3\% is attained by the Dual reformulation and less that 2\% by RsBB, which correspond to robust feasible solutions. Imposing $\Omega=0.3$ significantly impeded the computational complexity of bental5 resulting in a feasible solution with 15\% gap even after 24h of run time with Gurobi. For the box and ellipsoidal set the Dual reformulation terminates in less than 1 sec for the instances solved to robust optimality . For the ellipsoidal set 85\% of the instances are solved in less than 1 sec. The RsBB and PyROS methods are performing similarly for the instances solved in  less than 1 sec, however as the computational demand increases the RsBB results in reduced CPU time. While \citep{Wiebe2019} found similar performance in terms of median CPU time between the box and polyhedral set and a higher median CPU for the ellipsoidal set, only 83\% of the instances were solved for the polyhedral set. In this study, the CPU time for the Dual reformulation is higher for the polyhedral set, yet all instances could be solved within 1 hour. This is to highlight that the performance of the global solver dictates the convergence to the robust optimal solution. The results of Figure \ref{fig:inst_solved} suggest that integrating the robust cutting planes in a sBB framework can improve the performance of solely using robust cutting planes with a global solver. The convergence rate of the robust optimization methodologies under ellipsoidal uncertainty set is impeded by the problem complexity. In pooling problems with a high number of input feeds and inlet quality parameters the use of the ellipsoidal could result in significant computational delays, especially following the dual reformulation approach. 

\begin{figure}[H]
    \centering
    \includegraphics{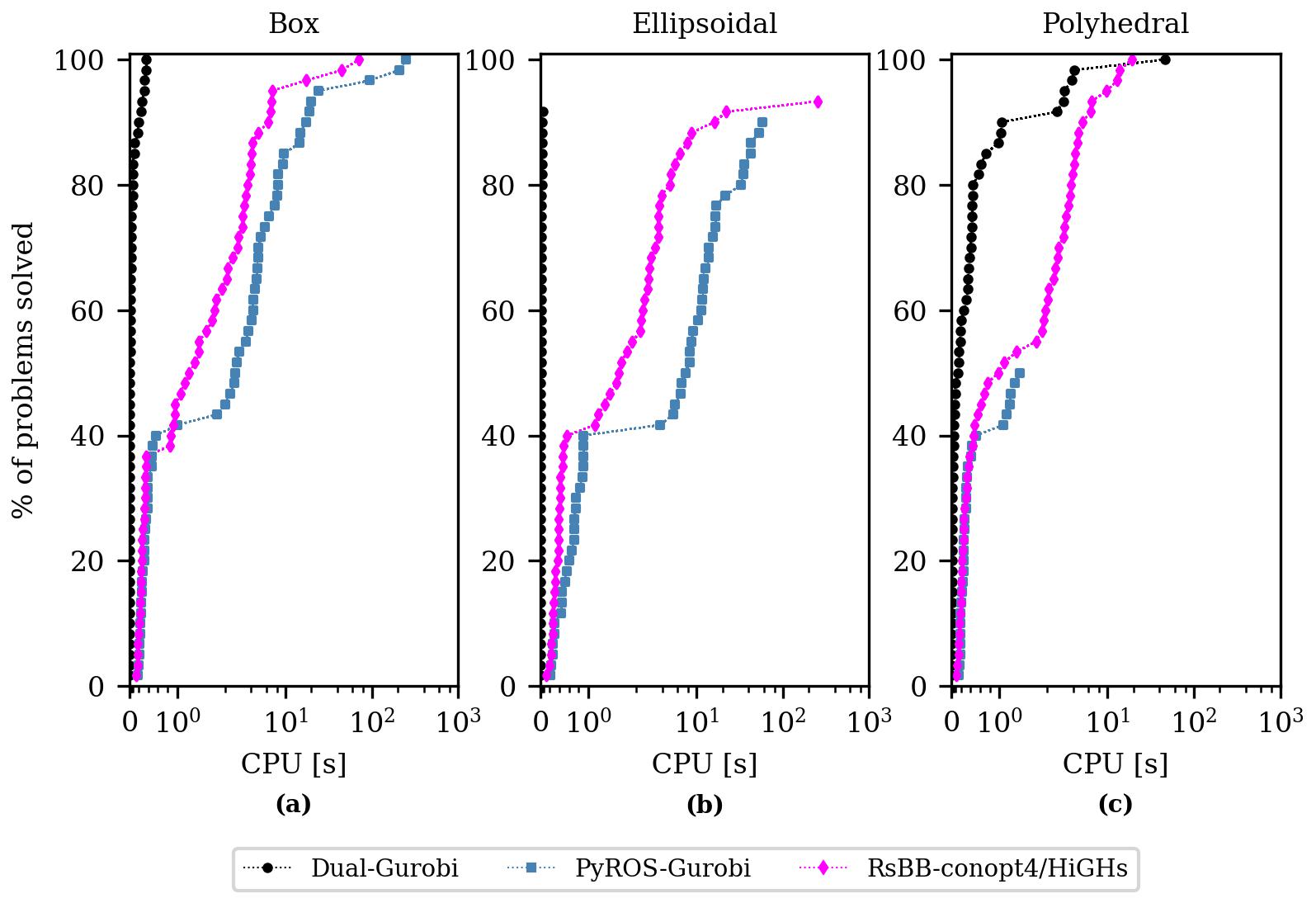}
    \caption{Cumulative CPU time for solved instances via Dual reformulation, PyROS and RsBB methods for different uncertainty types}
    \label{fig:inst_solved}
\end{figure}

\begin{sidewaystable}[]
\centering
\caption{Solution statistics for the instances not solved in Figure \ref{fig:inst_solved}}
\begin{tabular}{llccccccc}
\toprule
\textbf{Problem} & \textbf{Set type} & \textbf{Set size} &
\multicolumn{2}{c}{\textbf{Dual reformulation}} &
\multicolumn{1}{c}{\textbf{PyROS}} &
\multicolumn{2}{c}{\textbf{RsBB}} \\
\cmidrule(lr){4-5} \cmidrule(lr){6-6} \cmidrule(lr){7-8}
 &  &  & \textbf{Gap} & \textbf{Termination} & \textbf{Termination} & \textbf{Gap} & \textbf{Termination} \\
\midrule
bental5 & Ellipsoidal & 0.05 & $<$0.01\% & Optimal solution &  Time Limit Exceeded & $<$0.01\% & Optimal solution \\
bental5 & Ellipsoidal & 0.10 & 0.4\% & Time Limit Exceeded  & Time Limit Exceeded & $<$0.01\% & Optimal solution \\
bental5 & Ellipsoidal & 0.15 & 1.3\% & Time Limit Exceeded  & Time Limit Exceeded & 1.00\% & Time Limit Exceeded \\
bental5 & Ellipsoidal & 0.20 & 1.7\% & Time Limit Exceeded & Time Limit Exceeded & 1.30\% & Time Limit Exceeded \\
bental5 & Ellipsoidal & 0.25 & 2.8\% & Time Limit Exceeded  & Time Limit Exceeded & 1.80\% & Time Limit Exceeded \\
bental5 & Ellipsoidal & 0.30 & 48.0\% & Time Limit Exceeded  & Time Limit Exceeded & 46.70\% & Time Limit Exceeded \\
bental5 & Polyhedral & 0.05 & $<$0.01\% & Optimal solution  & Time Limit Exceeded & $<$0.01\% & Optimal solution \\
bental5 & Polyhedral & 0.10 & $<$0.01\% & Optimal solution  & Time Limit Exceeded & $<$0.01\% & Optimal solution \\
bental5 & Polyhedral & 0.15 & $<$0.01\% & Optimal solution  & Time Limit Exceeded & $<$0.01\% & Optimal solution \\
bental5 & Polyhedral & 0.20 & $<$0.01\% & Optimal solution  & Time Limit Exceeded & $<$0.01\% & Optimal solution \\
bental5 & Polyhedral & 0.25 & $<$0.01\% & Optimal solution  & Time Limit Exceeded & $<$0.01\% & Optimal solution \\
bental5 & Polyhedral & 0.30 & $<$0.01\% & Optimal solution  & Time Limit Exceeded & $<$0.01\% & Optimal solution \\
adhya1-4 & Polyhedral & 0.05 & $<$0.01\% & Optimal solution  & Time Limit Exceeded & $<$0.01\% & Optimal solution \\
adhya1-4 & Polyhedral & 0.10 & $<$0.01\% & Optimal solution  & Time Limit Exceeded & $<$0.01\% & Optimal solution \\
adhya1-4 & Polyhedral & 0.15 & $<$0.01\% & Optimal solution  & Time Limit Exceeded & $<$0.01\% & Optimal solution \\
adhya1-4 & Polyhedral & 0.20 & $<$0.01\% & Optimal solution  & Time Limit Exceeded & $<$0.01\% & Optimal solution \\
adhya1-4 & Polyhedral & 0.25 & $<$0.01\% & Optimal solution  & Time Limit Exceeded & $<$0.01\% & Optimal solution \\
adhya1-4 & Polyhedral & 0.30 & $<$0.01\% & Optimal solution  & Time Limit Exceeded & $<$0.01\% & Optimal solution \\
\bottomrule
\end{tabular}
\label{tab:dual_pyros_rsbb}
\end{sidewaystable}

Statistics for RsBB algorithm can be evaluated via the variability of the number of tree nodes explored in Figure \ref{fig:nodes}. As a general trend, for all set types the range of the nodes explored reduces with the increase of the uncertainty set size. As the uncertainty size increases, the corresponding robust cuts become more restraining reducing the feasible region more rigorously. The detailed results for the number of nodes explored across all examined instances is presented in Table \ref{tab:all_nodes}. It can be observed that across the columns of Table \ref{tab:all_nodes}, i.e. for fixed uncertainty set type and size, the number of nodes explored increases with the complexity of the problem. Apart from the outliers depicted in Figure \ref{fig:nodes}, for problems bental5, adhya3 and adhya4 there were instances which required more than 100 nodes to be solved to global robust optimality. The aforementioned problems correspond to the problems of higher complexity across the examined case studies, having the largest number of variables and equations. Taking advantage of more rigorous branching and bounding approaches could result in a faster convergence.

\begin{figure}[H]
    \centering
    \includegraphics[width=0.7\linewidth]{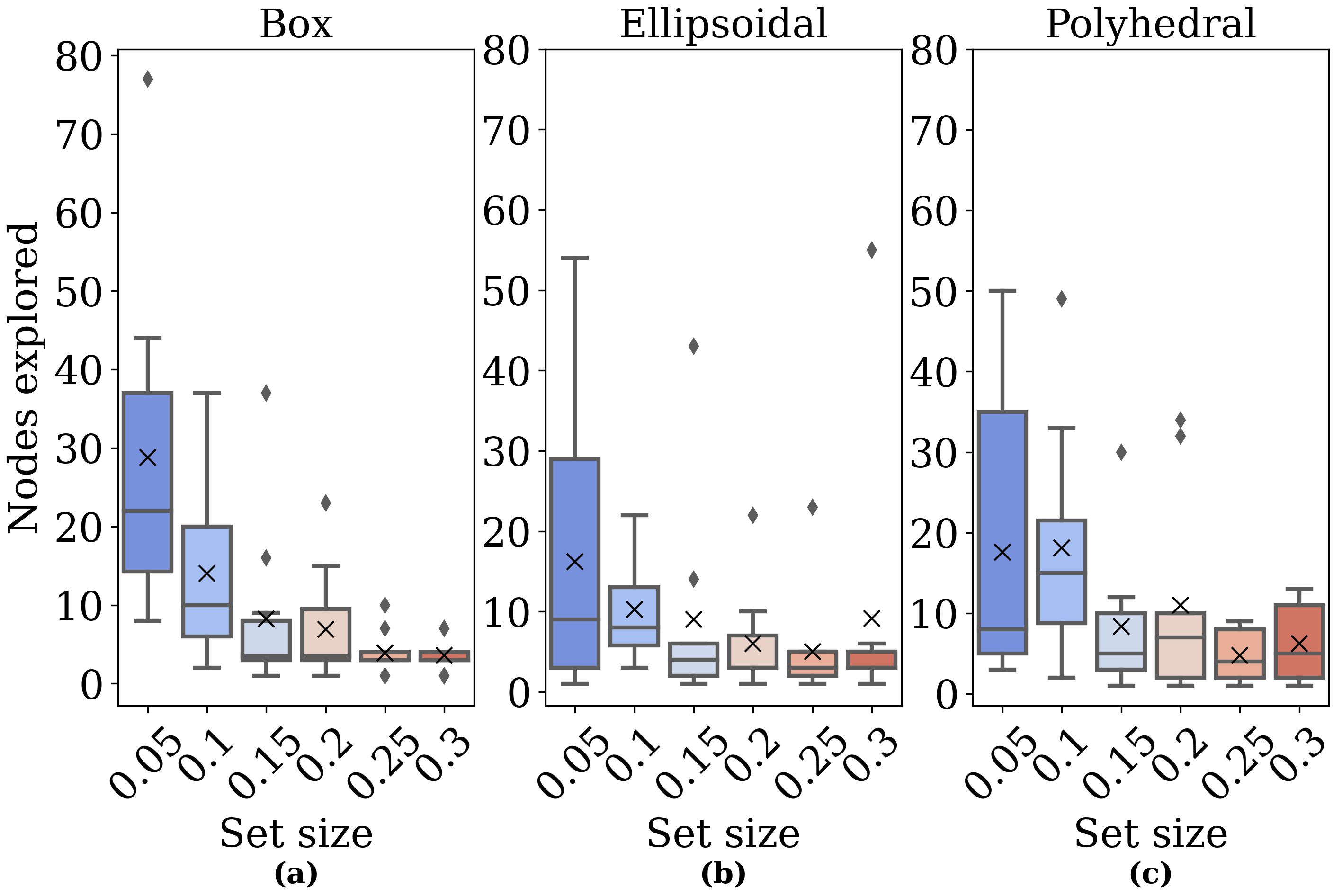}
    \caption{Variability of tree nodes explored for set size and set type for problems solved to robust optimality within 1 hour. }
    \label{fig:nodes}
\end{figure}

\begin{sidewaystable}
\centering
\caption{Number of nodes explored by RsBB across uncertainty sets}
\label{tab:all_nodes}

\begin{tabular}{l*{18}{c}}
\toprule
& \multicolumn{6}{c}{Box ($\Psi$)} & \multicolumn{6}{c}{Ellipsoidal ($\Omega$)} & \multicolumn{6}{c}{Polyhedral ($\Gamma$)} \\
\cmidrule(lr){2-7}\cmidrule(lr){8-13}\cmidrule(lr){14-19}
Problem & 0.05 & 0.10 & 0.15 & 0.20 & 0.25 & 0.30 & 0.05 & 0.10 & 0.15 & 0.20 & 0.25 & 0.30 & 0.05 & 0.10 & 0.15 & 0.20 & 0.25 & 0.30 \\
\midrule
haverly1 & 13 & 3  & 3  & 3  & 3  & 3  & 3  & 3  & 2  & 3  & 3  & 3  & 5  & 7  & 3  & 2  & 2  & 2  \\
haverly2 & 18 & 6  & 3  & 3  & 3  & 3  & 11 & 6  & 2  & 3  & 3  & 3  & 11 & 14 & 6  & 2  & 2  & 2  \\
haverly3 & 37 & 37 & 37 & 23 & 3  & 3  & 5  & 5  & 4  & 2  & 2  & 2  & 5  & 6  & 5  & 4  & 2  & 2  \\
bental4  & 10 & 2  & 1  & 1  & 1  & 1  & 3  & 10 & 6  & 1  & 1  & 1  & 3  & 15 & 5  & 1  & 1  & 1  \\
bental5  & 37 & 20 & 16 & 15 & 10 & 170& 1  & 186& 1009* & 1323* & 1874* & 3235* & 3  & 33 & 185 & 308 & 389 & 395 \\
foulds2  & 8 & 7  & 1 & 1  & 1  & 1  & 9  & 6  & 6 & 22 & 23 & 55 & 8  & 2  & 10 & 10 & 9  & 12 \\
adhya1   & 44 & 11 & 9  & 3  & 4  & 3  & 29 & 22 & 3  & 3  & 2  & 3  & 38 & 23 & 3  & 7  & 8  & 11 \\
adhya2   & 24 & 10 & 5  & 5  & 4  & 4  & 54 & 19 & 1  & 10 & 5  & 5  & 50 & 17 & 1  & 7  & 4  & 8  \\
adhya3   & 20 & 163& 4  & 4  & 4  & 7  & 31 & 110& 43 & 3  & 1  & 6  & 35 & 49 & 12 & 34 & 7  & 5  \\
adhya4   & 77 & 30 & 3  & 11 & 7  & 7  & 412& 11 & 14 & 7  & 5  & 4  & 118& 15 & 30 & 32 & 8  & 13 \\
\bottomrule
\addlinespace
\multicolumn{19}{l}{\footnotesize\emph{* Time limit exceeded.}}\\
\end{tabular}
\end{sidewaystable}

Increasing the resilience of a solution is most often at the expense of optimality \cite{Bertsimas04}. The dependence of the robust optimal objective to the uncertainty set type and size is presented in Figure \ref{fig:obj} as a percentage increase to the nominal objective. For the same uncertainty size, box uncertainty set results in higher cost increase followed by the ellipsoidal and the polyhedral set. Note that for bental5 and ellipsoidal uncertainty set $\Omega=0.3$ the solution obtained after 24h of runtime corresponded to a 15\% gap, hence the \% increase to the nominal was not included in the heatmap. The cost increase is most commonly accompanied by an alternative optimal solution, hence a different BB search route. Haverly3 is the most flexible problem that meets up to a 30\% cost increase even with a 30\% increase of the uncertain parameter. Bental5 and adhya4 have the least operational flexibility since even a 5\% fluctuation in the inlet quality results in a 50\% cost increase. The remaining adhya instances present a sharp cost increase for uncertainty sizes higher than 0.1 stemming from an alternative pooling network operation. Hence the search for an optimal robust solution is not solely dictate by the problem size and uncertainty set characterization, but also by the inherent flexibility of each examined problem. 
\begin{figure}[H]
    \centering
    \includegraphics[width=\linewidth]{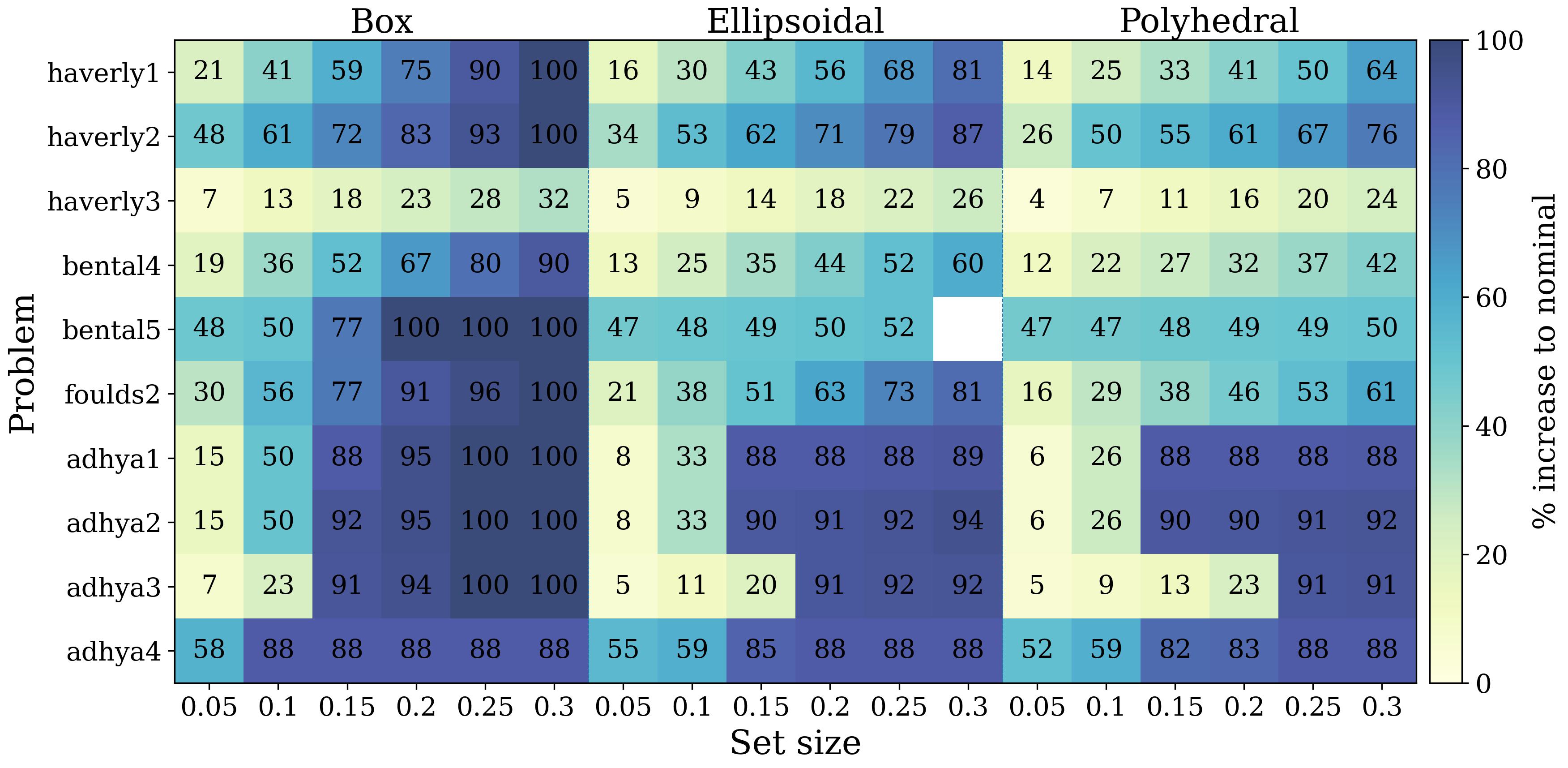}
    \caption{Increase of the robust optimal objective value with respect to the nominal solution. }
    \label{fig:obj}
\end{figure}

Focusing on a specific problem, i.e. foulds2, we can investigate the dependence of the RsBB convergence on the uncertainty set size and set type as in Figure \ref{fig:conv_foulds}, where the relative optimality gap is evaluated as $\frac{|UB-LB|}{|LB|}\times 100$. Increasing the uncertainty size results in a decrease of the RsBB iterations and at the same time a decrease in the optimality gap at the root node. Fixing the same uncertainty size to 0.15, faster convergence is achieved for the box uncertainty set followed by the ellipsoidal and the polyhedral, analogous to the decrease of the set conservativism. For foulds2 with $\Psi=0.15$, RsBB terminates at a robust optimal solution on the root node, giving grounds for further investigation. Figure \ref{fig:cut_foulds} depicts the increase of the objective value in robust cut iterations at the root node of foulds2 from Figure \ref{fig:conv_foulds}. For $\Psi=0.15$, RsBB performs 11 robust cut iterations to obtain the robust optimal solution at the root node. In contrast, for $\Psi=0.1$, fewer cut iterations lead to a higher optimality gap at the root node. For the polyhedral set and $\Gamma=0.15$ only 3 cut iterations take place in the root node, resulting in an initial gap of 9.5\%. It is important to highlight that the objective values retrieved at the root node in \ref{fig:cut_foulds}b correspond to the robust optimal solution and the extra tree nodes are used to tighten the solution of the linear relaxation. For the box and polyhedral uncertainty sets solving the lower-level problem has a finite number of solutions, which are the vertices of the polytopes. Finiteness has also been proved in the case of ellipsoidal set by \cite{casero}, however as displayed in Figure \ref{fig:cut_foulds}b there can be multiple degenerate lower-level violating solutions that do not affect the feasibility region of the upper-level problem.
The optimal objective value of foulds2 for the nominal uncertain parameter is at -1100. The right axis of Figure \ref{fig:cut_foulds} depicts the \% increase of the cost as different uncertainty set and sizes and considered. Allowing a 15\% perturbation with the polyhedral set will result in an increase of 38\% of the objective value, 51\% for the ellipsoidal set and 77\% for the box set.

\begin{figure}[H]
    \centering
    \includegraphics[width=\linewidth]{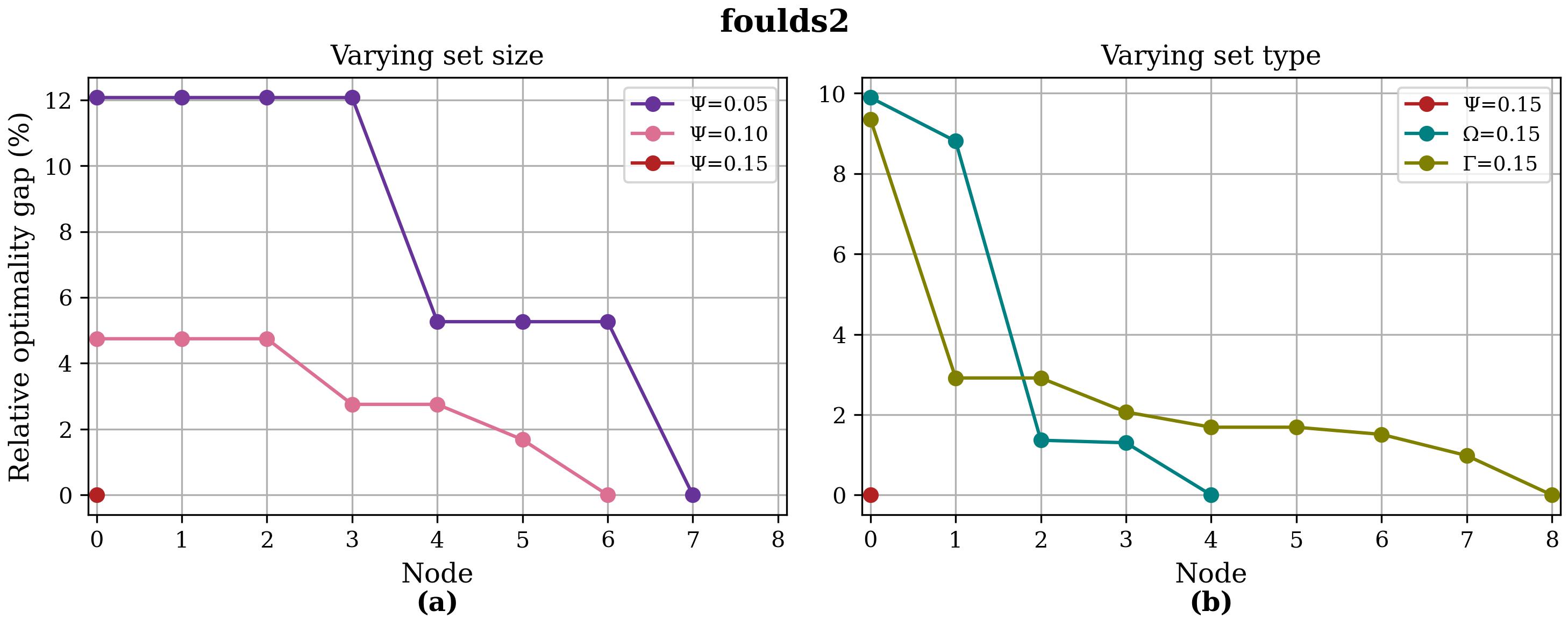}
    \caption{Foulds2 relative optimality gap closure a) for varying size of box uncertainty set, b) and varying set types of size 0.15.}
    \label{fig:conv_foulds}
\end{figure}

\begin{figure}[H]
    \centering
    \includegraphics[width=\linewidth]{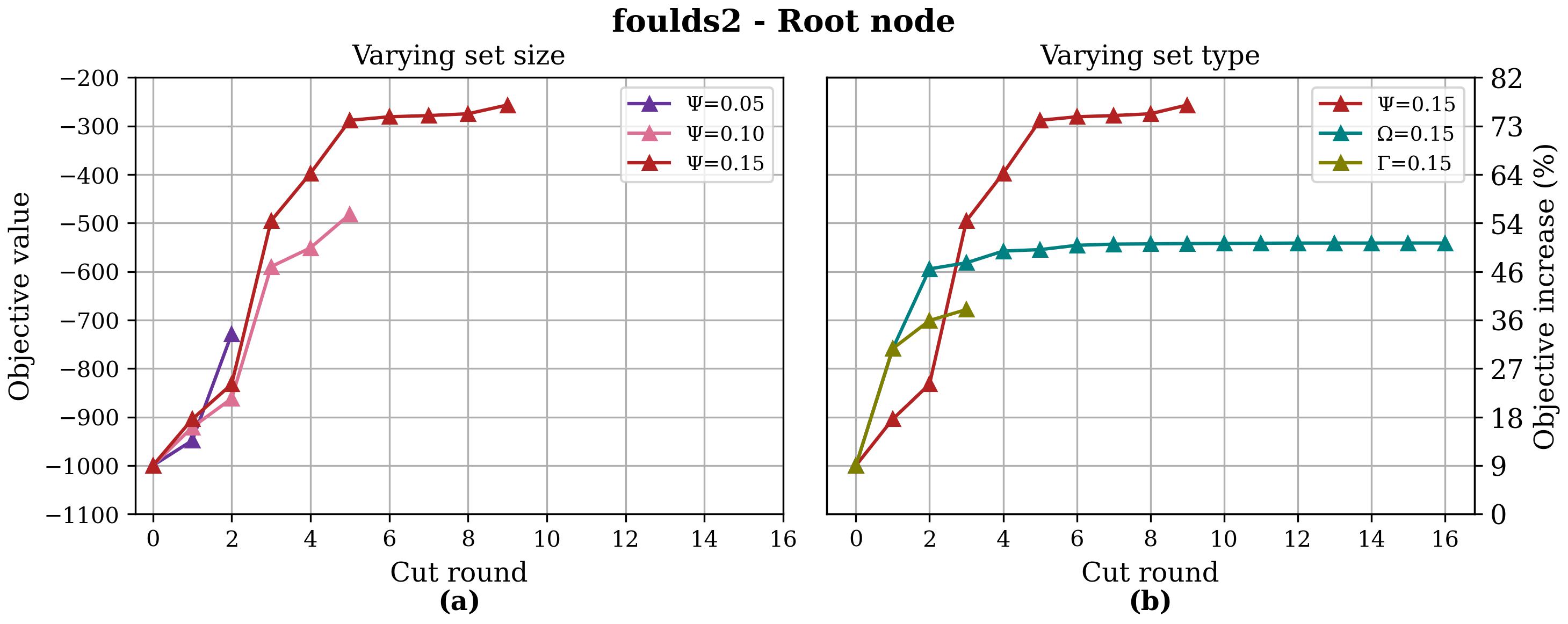}
    \caption{Foulds2 objective value in robust cut iterations at root node a) for varying size of box uncertainty set, b) and varying set types of size 0.15.}
    \label{fig:cut_foulds}
\end{figure}
The detailed bound convergence of the foulds2 instances is presented in Figure \ref{fig:foulds_bounds}. Apart from the root node, it can be observed that an additional robust cut round is required for $\Psi=0.05$ and $\Omega=0.15$ foulds2 instances. In the former, the added cuts are driving the upper-bound  (UB) to convergence. For the ellipsoidal set, the UB has obtained a global solution at the root node and the remaining BB nodes are required to close the gap with the lower-bound (LB). Similarly, for $\Psi=0.10$ and $\Gamma=0.15$ the UB has obtained the robust optimal solution at root node.  The comparison of the bound convergence across problems with different dimensionality is performed in Figure \ref{fig:conv_size}. In both instances an additional cut round is performed in addition to the one at root node which is driving both the UB and LB to convergence. With the second cut round the robust optimal solution is attained for both instances, however the LB convergence is slower resulting in multiple plateau regions. For $\Psi=0.05$ the nodes explored by haverly1 are greater than the nodes explored by foulds2 despite the fact that haverly1 has smaller dimensionality, which can be attributed to the difference of the relative optimality gap at root node. For haverly1 the relative optimality gaps is 87\%, 12\% for foulds2 and 17\% for adhya2. Overall, while the robust cuts result in a rigorous UB convergence, employing tigther lower bounding envelopes could further decrease the RsBB tree size and CPU time. 

\begin{figure}[H]
    \centering
    \includegraphics[width=\linewidth]{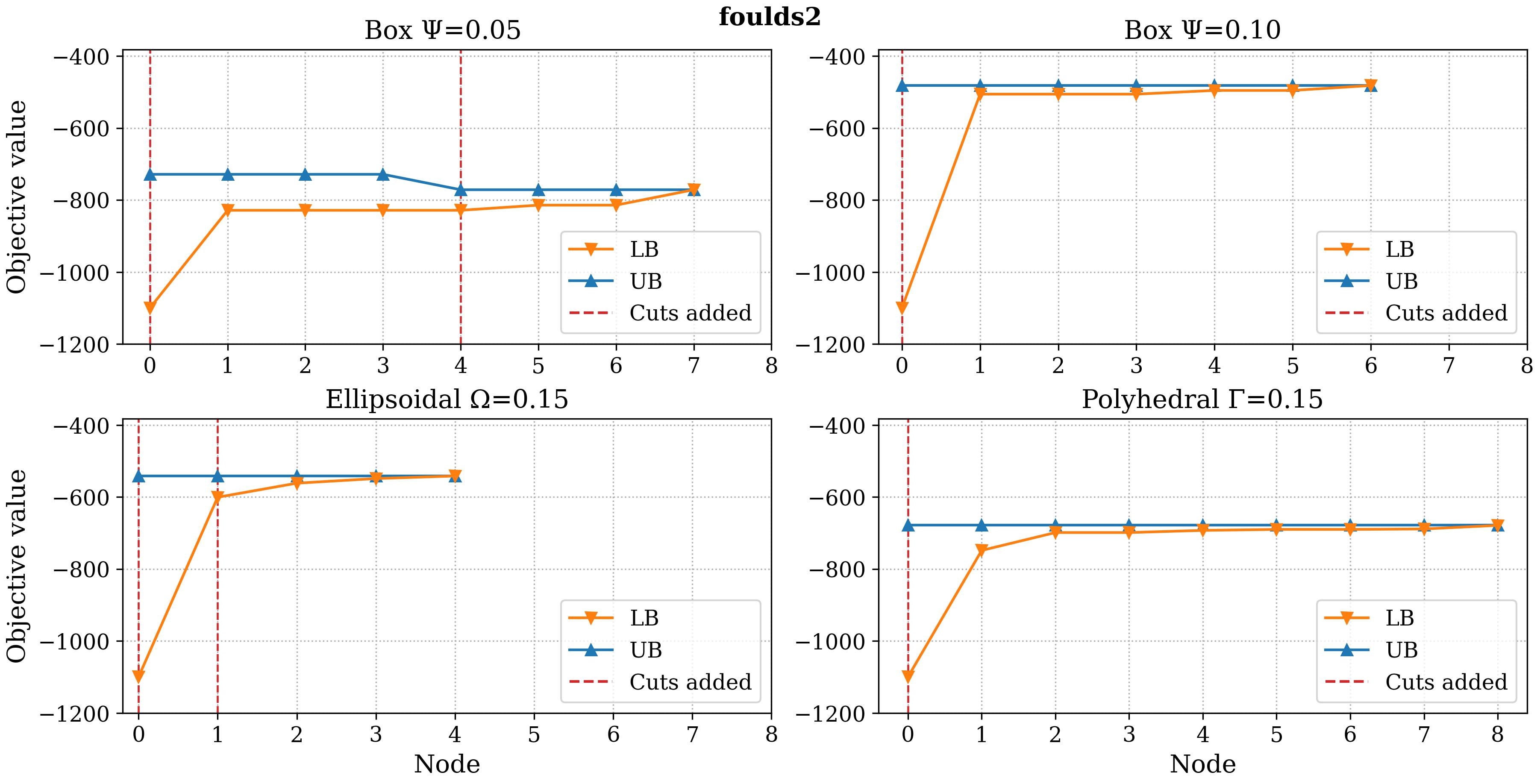}
    \caption{Foulds2 bound convergence for varying uncertainty set and size. }
    \label{fig:foulds_bounds}
\end{figure}

\begin{figure}[H]
    \centering
    \includegraphics[width=\linewidth]{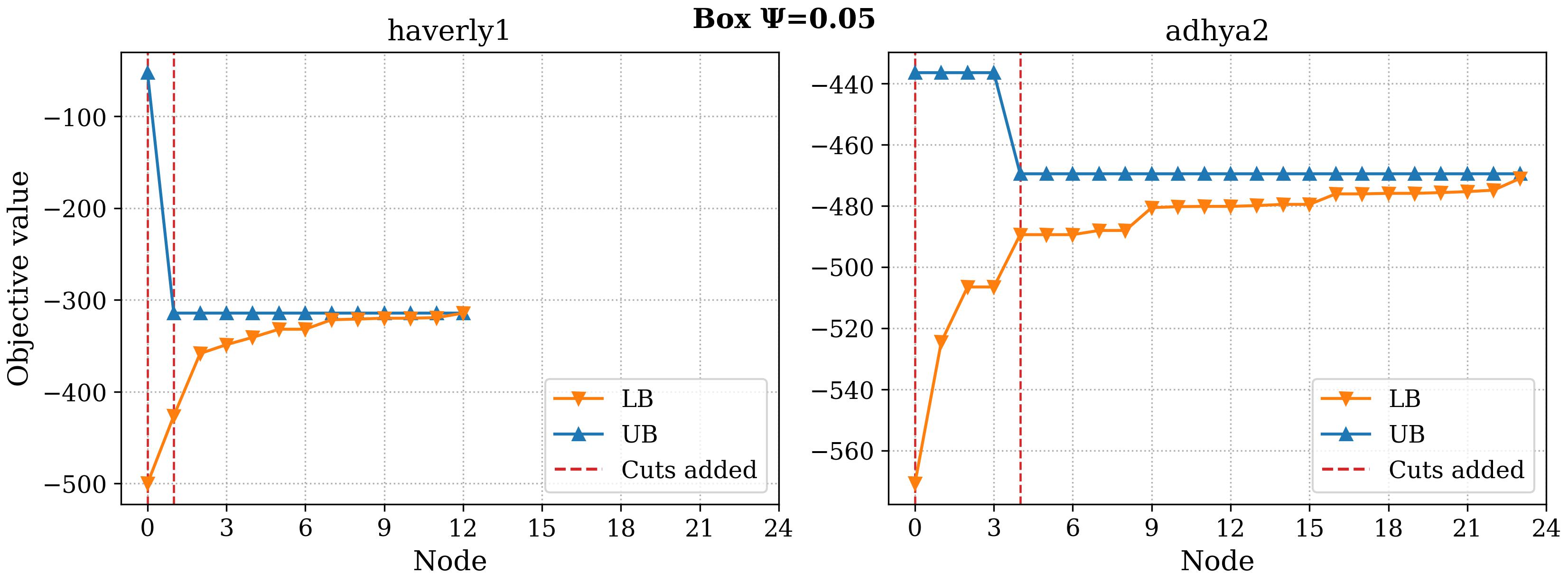}
    \caption{haverly1 and adhya2 bound convergence for box uncertainty $\Psi=0.05$.}
    \label{fig:conv_size}
\end{figure}

Throughout the examined results is has been highlighted that increasing the conservativism of the problem facilitates and accelerates the performance of the RsBB algorithm. Notably, robust cuts serve as a powerful feasibility bounds tightening method, significantly reducing the feasible region for the sBB tree nodes, which enhances the overall efficiency and accuracy of the algorithm.

\section{Conclusion}
In this work, we introduced a novel algorithm that integrates global and robust optimization methods. The proposed RsBB algorithm combines the principles of spatial branch-and-bound and robust cutting planes to solve continuous non-convex problems under convex uncertainty sets. We applied our approach to QCQP pooling problems utilizing McCormick envelopes to obtain convex lower bounds. The inlet quality parameter was treated as uncertain, with its range defined by box, ellipsoidal, and polyhedral uncertainty sets of varying sizes. The performance of the RsBB algorithm was compared to state-of-the-art methods that rely on global solvers. Our computational results demonstrate that the proposed method can solve the 97\% of the tested problems to robust optimality. The computational time was an order of magnitude higher to the direct solve of the dual problem with a global solver. However further improvements via the use of tighter approximations such as RLT constraints and using dynamic branching approaches could further reduce the CPU time of our methodology. We observed that using robust cutting planes in the sBB algorithm serves as an effective feasibility bounds tightening method. Finally, the integration of robust cuts is proven to have a beneficial impact on the sBB algorithm facilitating the optimality search both in terms of problem tractability and CPU time. The current implementation serves as a prototype for the integration of global and robust optimization methodologies. The efficiency of the proposed methodology strongly depends on the tractability of the non-convex problem and the efficiency of the employed relaxations. In the former case, to facilitate the convergence of the local solver, right-hand side restrictions from the SIP literature could be employed to gradually obtain a feasible solution. In the latter case, tailored relaxations could be employed based on SDP relaxations or taking advantage of problem-specific tailored relaxations. Extending the proposed RsBB algorithm to other classes of general non -convex problems is not a trivial task, and it constitutes a promising research direction.

\backmatter

\section*{Nomenclature for the pooling problem}
\subsection*{Sets}
\begin{tabbing}
$i$   \hspace{5em} \=   Feed components\\
$l$      \>Pools\\
$j$      \>Products \\
$k$       \>Qualities 
\end{tabbing}   

\subsection*{Parameters}
\begin{tabbing}
$\xi_k$   \hspace{5em} \=   Random uncertain variable of quality\\ 
$\Psi$  \> Adjustable parameter controlling the size of the uncertainty set\\
$A_i$   \>Upper bound for component availability\\
$D_j$       \>Upper bound for product demand\\
$S_l$       \>Upper bound for pool size \\
$P_{jk}^U$   \>Upper bound for product quality\\
$P_{jk}^L$   \>Lower bound for product quality\\
$C_{ik}$      \>Nominal level of quality for component\\
$\tilde{C}_{ik}$  \>True level of quality for component\\
$\hat{C}_{ik}$      \>Perturbed level of quality for component\\
$c_{i}$       \>Unit price for component\\
$d_{j}$       \>Unit price for product\\
\end{tabbing}  

\subsection*{Variables}
\begin{tabbing}
$q_{il}$ \hspace{5em}   \= Fraction of flow of component $i$ entering pool $l$\\
$y_{lj}$ \> Flow from pool $l$ to product $j$\\
$z_{ij}$ \> Flow from component $i$ to product $j$\\
$f_j$     \> Total flow to product $j$\\
$v_{ilj}$  \> Auxiliary variable for flow from component $i$ passing pool $l$ to product $j$
\end{tabbing}

\bmhead{Data availability statement}
Data and software will be made available by the authors upon reasonable request.
\bmhead{Acknowledgements}
Financial support under the EPSRC grants ADOPT (EP/W003317/1) and RiFtMaP (EP/V034723/1) is gratefully acknowledged by the authors.

\bigskip

\bibliography{GRON}


\begin{thebibliography}{73}
\ifx \bisbn   \undefined \def \bisbn  #1{ISBN #1}\fi
\ifx \binits  \undefined \def \binits#1{#1}\fi
\ifx \bauthor  \undefined \def \bauthor#1{#1}\fi
\ifx \batitle  \undefined \def \batitle#1{#1}\fi
\ifx \bjtitle  \undefined \def \bjtitle#1{#1}\fi
\ifx \bvolume  \undefined \def \bvolume#1{\textbf{#1}}\fi
\ifx \byear  \undefined \def \byear#1{#1}\fi
\ifx \bissue  \undefined \def \bissue#1{#1}\fi
\ifx \bfpage  \undefined \def \bfpage#1{#1}\fi
\ifx \blpage  \undefined \def \blpage #1{#1}\fi
\ifx \burl  \undefined \def \burl#1{\textsf{#1}}\fi
\ifx \doiurl  \undefined \def \doiurl#1{\url{https://doi.org/#1}}\fi
\ifx \betal  \undefined \def \betal{\textit{et al.}}\fi
\ifx \binstitute  \undefined \def \binstitute#1{#1}\fi
\ifx \binstitutionaled  \undefined \def \binstitutionaled#1{#1}\fi
\ifx \bctitle  \undefined \def \bctitle#1{#1}\fi
\ifx \beditor  \undefined \def \beditor#1{#1}\fi
\ifx \bpublisher  \undefined \def \bpublisher#1{#1}\fi
\ifx \bbtitle  \undefined \def \bbtitle#1{#1}\fi
\ifx \bedition  \undefined \def \bedition#1{#1}\fi
\ifx \bseriesno  \undefined \def \bseriesno#1{#1}\fi
\ifx \blocation  \undefined \def \blocation#1{#1}\fi
\ifx \bsertitle  \undefined \def \bsertitle#1{#1}\fi
\ifx \bsnm \undefined \def \bsnm#1{#1}\fi
\ifx \bsuffix \undefined \def \bsuffix#1{#1}\fi
\ifx \bparticle \undefined \def \bparticle#1{#1}\fi
\ifx \barticle \undefined \def \barticle#1{#1}\fi
\bibcommenthead
\ifx \bconfdate \undefined \def \bconfdate #1{#1}\fi
\ifx \botherref \undefined \def \botherref #1{#1}\fi
\ifx \url \undefined \def \url#1{\textsf{#1}}\fi
\ifx \bchapter \undefined \def \bchapter#1{#1}\fi
\ifx \bbook \undefined \def \bbook#1{#1}\fi
\ifx \bcomment \undefined \def \bcomment#1{#1}\fi
\ifx \oauthor \undefined \def \oauthor#1{#1}\fi
\ifx \citeauthoryear \undefined \def \citeauthoryear#1{#1}\fi
\ifx \endbibitem  \undefined \def \endbibitem {}\fi
\ifx \bconflocation  \undefined \def \bconflocation#1{#1}\fi
\ifx \arxivurl  \undefined \def \arxivurl#1{\textsf{#1}}\fi
\csname PreBibitemsHook\endcsname

\bibitem[\protect\citeauthoryear{Zhang and Gounaris}{2022}]{Zhang2022}
\begin{barticle}
\bauthor{\bsnm{Zhang}, \binits{Q.}},
\bauthor{\bsnm{Gounaris}, \binits{C.E.}}:
\batitle{Methodology and applications of robust optimization}.
\bjtitle{Optimization and Engineering}
\bvolume{23},
\bfpage{1761}--\blpage{1764}
(\byear{2022})
\doiurl{10.1007/S11081-022-09759-8}
\end{barticle}
\endbibitem

\bibitem[\protect\citeauthoryear{Gabrel et~al.}{2014}]{Gabrel2014}
\begin{barticle}
\bauthor{\bsnm{Gabrel}, \binits{V.}},
\bauthor{\bsnm{Murat}, \binits{C.}},
\bauthor{\bsnm{Thiele}, \binits{A.}}:
\batitle{Recent advances in robust optimization: An overview}.
\bjtitle{European Journal of Operational Research}
\bvolume{235},
\bfpage{471}--\blpage{483}
(\byear{2014})
\doiurl{10.1016/J.EJOR.2013.09.036}
\end{barticle}
\endbibitem

\bibitem[\protect\citeauthoryear{Ben-Tal and Nemirovski}{1998}]{Bental98}
\begin{barticle}
\bauthor{\bsnm{Ben-Tal}, \binits{A.}},
\bauthor{\bsnm{Nemirovski}, \binits{A.}}:
\batitle{Robust convex optimization}.
\bjtitle{Mathematics of Operations Research}
\bvolume{23},
\bfpage{769}--\blpage{805}
(\byear{1998})
\doiurl{10.1287/MOOR.23.4.769}
\end{barticle}
\endbibitem

\bibitem[\protect\citeauthoryear{Gorissen et~al.}{2015}]{Gorissesn2015}
\begin{barticle}
\bauthor{\bsnm{Gorissen}, \binits{B.L.}},
\bauthor{\bsnm{Yanikoğlu}, \binits{I.}},
\bauthor{\bsnm{Hertog}, \binits{D.}}:
\batitle{A practical guide to robust optimization}.
\bjtitle{Omega}
\bvolume{53},
\bfpage{124}--\blpage{137}
(\byear{2015})
\doiurl{10.1016/J.OMEGA.2014.12.006}
\end{barticle}
\endbibitem

\bibitem[\protect\citeauthoryear{Li et~al.}{2011}]{Li2011}
\begin{barticle}
\bauthor{\bsnm{Li}, \binits{Z.}},
\bauthor{\bsnm{Ding}, \binits{R.}},
\bauthor{\bsnm{Floudas}, \binits{C.A.}}:
\batitle{A comparative theoretical and computational study on robust counterpart optimization: I. robust linear optimization and robust mixed integer linear optimization}.
\bjtitle{Industrial and Engineering Chemistry Research}
\bvolume{50},
\bfpage{10567}--\blpage{10603}
(\byear{2011})
\doiurl{10.1021/ie200150p}
\end{barticle}
\endbibitem

\bibitem[\protect\citeauthoryear{Mutapcic and Boyd}{2009}]{Mutapcic2009}
\begin{barticle}
\bauthor{\bsnm{Mutapcic}, \binits{A.}},
\bauthor{\bsnm{Boyd}, \binits{S.}}:
\batitle{Cutting-set methods for robust convex optimization with pessimizing oracles}.
\bjtitle{Optimization Methods and Software}
\bvolume{24},
\bfpage{381}--\blpage{406}
(\byear{2009})
\doiurl{10.1080/10556780802712889}
\end{barticle}
\endbibitem

\bibitem[\protect\citeauthoryear{Isenberg et~al.}{2021}]{Isenberg2021}
\begin{barticle}
\bauthor{\bsnm{Isenberg}, \binits{N.M.}},
\bauthor{\bsnm{Akula}, \binits{P.}},
\bauthor{\bsnm{Eslick}, \binits{J.C.}},
\bauthor{\bsnm{Bhattacharyya}, \binits{D.}},
\bauthor{\bsnm{Miller}, \binits{D.C.}},
\bauthor{\bsnm{Gounaris}, \binits{C.E.}}
\bjtitle{AIChE Journal}
\bvolume{67},
\bfpage{17175}
(\byear{2021})
\doiurl{10.1002/AIC.17175}
\end{barticle}
\endbibitem

\bibitem[\protect\citeauthoryear{Fischetti and Monaci}{2012}]{Fischetti2012}
\begin{barticle}
\bauthor{\bsnm{Fischetti}, \binits{M.}},
\bauthor{\bsnm{Monaci}, \binits{M.}}:
\batitle{Cutting plane versus compact formulations for uncertain (integer) linear programs}.
\bjtitle{Mathematical Programming Computation}
\bvolume{4},
\bfpage{239}--\blpage{273}
(\byear{2012})
\doiurl{10.1007/S12532-012-0039-Y}
\end{barticle}
\endbibitem

\bibitem[\protect\citeauthoryear{Bertsimas et~al.}{2016}]{Bertsimas2016}
\begin{barticle}
\bauthor{\bsnm{Bertsimas}, \binits{D.}},
\bauthor{\bsnm{Dunning}, \binits{I.}},
\bauthor{\bsnm{Lubin}, \binits{M.}}:
\batitle{Reformulation versus cutting-planes for robust optimization: A computational study}.
\bjtitle{Computational Management Science}
\bvolume{13},
\bfpage{195}--\blpage{217}
(\byear{2016})
\doiurl{10.1007/s10287-015-0236-z}
\end{barticle}
\endbibitem

\bibitem[\protect\citeauthoryear{Stein}{2012}]{Stein2012}
\begin{barticle}
\bauthor{\bsnm{Stein}, \binits{O.}}:
\batitle{How to solve a semi-infinite optimization problem}.
\bjtitle{European Journal of Operational Research}
\bvolume{223},
\bfpage{312}--\blpage{320}
(\byear{2012})
\doiurl{10.1016/J.EJOR.2012.06.009}
\end{barticle}
\endbibitem

\bibitem[\protect\citeauthoryear{Blankenship and Falk}{1976}]{Blankenship1976}
\begin{barticle}
\bauthor{\bsnm{Blankenship}, \binits{J.W.}},
\bauthor{\bsnm{Falk}, \binits{J.E.}}:
\batitle{Infinitely constrained optimization problems}.
\bjtitle{Journal of Optimization Theory and Applications}
\bvolume{19},
\bfpage{261}--\blpage{281}
(\byear{1976})
\doiurl{10.1007/BF00934096}
\end{barticle}
\endbibitem

\bibitem[\protect\citeauthoryear{Mitsos}{2011}]{Mitsos2011}
\begin{barticle}
\bauthor{\bsnm{Mitsos}, \binits{A.}}:
\batitle{Global optimization of semi-infinite programs via restriction of the right-hand side}.
\bjtitle{Optimization}
\bvolume{60},
\bfpage{1291}--\blpage{1308}
(\byear{2011})
\doiurl{10.1080/02331934.2010.527970}
\end{barticle}
\endbibitem

\bibitem[\protect\citeauthoryear{Mitsos and Tsoukalas}{2015}]{Mitsos2015}
\begin{barticle}
\bauthor{\bsnm{Mitsos}, \binits{A.}},
\bauthor{\bsnm{Tsoukalas}, \binits{A.}}:
\batitle{Global optimization of generalized semi-infinite programs via restriction of the right hand side}.
\bjtitle{Journal of Global Optimization}
\bvolume{61},
\bfpage{1}--\blpage{17}
(\byear{2015})
\doiurl{10.1007/S10898-014-0146-6}
\end{barticle}
\endbibitem

\bibitem[\protect\citeauthoryear{Marco and Georg}{2007}]{LopezMarco2007}
\begin{barticle}
\bauthor{\bsnm{Marco}, \binits{L.}},
\bauthor{\bsnm{Georg}, \binits{S.}}:
\batitle{Semi-infinite programming}.
\bjtitle{European Journal of Operational Research}
\bvolume{180},
\bfpage{491}--\blpage{518}
(\byear{2007})
\end{barticle}
\endbibitem

\bibitem[\protect\citeauthoryear{Seidel and Küfer}{2022}]{Seidel2022}
\begin{barticle}
\bauthor{\bsnm{Seidel}, \binits{T.}},
\bauthor{\bsnm{Küfer}, \binits{K.H.}}:
\batitle{An adaptive discretization method solving semi-infinite optimization problems with quadratic rate of convergence}.
\bjtitle{Optimization}
\bvolume{71},
\bfpage{2211}--\blpage{2239}
(\byear{2022})
\doiurl{10.1080/02331934.2020.1804566}
\end{barticle}
\endbibitem

\bibitem[\protect\citeauthoryear{Bhattacharjee et~al.}{2005a}]{Bhattacharjee2005}
\begin{barticle}
\bauthor{\bsnm{Bhattacharjee}, \binits{B.}},
\bauthor{\bsnm{Green}, \binits{W.H.}},
\bauthor{\bsnm{Barton}, \binits{P.I.}}:
\batitle{Interval methods for semi-infinite programs}.
\bjtitle{Computational Optimization and Applications}
\bvolume{30},
\bfpage{63}--\blpage{93}
(\byear{2005})
\doiurl{10.1007/S10589-005-4556-8}
\end{barticle}
\endbibitem

\bibitem[\protect\citeauthoryear{Bhattacharjee et~al.}{2005b}]{Bhattacharjee2005math}
\begin{barticle}
\bauthor{\bsnm{Bhattacharjee}, \binits{B.}},
\bauthor{\bsnm{Lemonidis}, \binits{P.}},
\bauthor{\bsnm{Green}, \binits{W.H.}},
\bauthor{\bsnm{Barton}, \binits{P.I.}}:
\batitle{Global solution of semi-infinite programs}.
\bjtitle{Math. Program.}
\bvolume{103},
\bfpage{283}--\blpage{307}
(\byear{2005})
\doiurl{10.1007/s10107-005-0583-6}
\end{barticle}
\endbibitem

\bibitem[\protect\citeauthoryear{Mitsos et~al.}{2008}]{Mitsos2008}
\begin{barticle}
\bauthor{\bsnm{Mitsos}, \binits{A.}},
\bauthor{\bsnm{Lemonidis}, \binits{P.}},
\bauthor{\bsnm{Lee}, \binits{C.K.}},
\bauthor{\bsnm{Barton}, \binits{P.I.}}:
\batitle{Relaxation-based bounds for semi-infinite programs}.
\bjtitle{https://doi.org/10.1137/060674685}
\bvolume{19},
\bfpage{77}--\blpage{113}
(\byear{2008})
\doiurl{10.1137/060674685}
\end{barticle}
\endbibitem

\bibitem[\protect\citeauthoryear{Floudas and Stein}{2007}]{Floudas2007}
\begin{barticle}
\bauthor{\bsnm{Floudas}, \binits{C.A.}},
\bauthor{\bsnm{Stein}, \binits{O.}}:
\batitle{The adaptive convexification algorithm: A feasible point method for semi-infinite programming}.
\bjtitle{https://doi.org/10.1137/060657741}
\bvolume{18},
\bfpage{1187}--\blpage{1208}
(\byear{2007})
\doiurl{10.1137/060657741}
\end{barticle}
\endbibitem

\bibitem[\protect\citeauthoryear{Stein and Steuermann}{2012}]{Stein2012adaptive}
\begin{barticle}
\bauthor{\bsnm{Stein}, \binits{O.}},
\bauthor{\bsnm{Steuermann}, \binits{P.}}:
\batitle{The adaptive convexification algorithm for semi-infinite programming with arbitrary index sets}.
\bjtitle{Math. Program., Ser. B}
\bvolume{136},
\bfpage{183}--\blpage{207}
(\byear{2012})
\doiurl{10.1007/s10107-012-0556-5}
\end{barticle}
\endbibitem

\bibitem[\protect\citeauthoryear{Leyffer et~al.}{2020}]{Leyffer2020}
\begin{barticle}
\bauthor{\bsnm{Leyffer}, \binits{S.}},
\bauthor{\bsnm{Menickelly}, \binits{M.}},
\bauthor{\bsnm{Munson}, \binits{T.}},
\bauthor{\bsnm{Vanaret}, \binits{C.}},
\bauthor{\bsnm{Wild}, \binits{S.M.}}:
\batitle{A survey of nonlinear robust optimization}.
\bjtitle{INFOR: Information Systems and Operational Research}
\bvolume{58},
\bfpage{342}--\blpage{373}
(\byear{2020})
\doiurl{10.1080/03155986.2020.1730676}
\end{barticle}
\endbibitem

\bibitem[\protect\citeauthoryear{Hung et~al.}{2024}]{hung2024solution}
\begin{botherref}
\oauthor{\bsnm{Hung}, \binits{N.C.}},
\oauthor{\bsnm{Chuong}, \binits{T.D.}},
\oauthor{\bsnm{Le~Hoang~Anh}, \binits{N.}}:
Solution existence for a class of nonsmooth robust optimization problems.
Journal of Global Optimization,
1--23
(2024)
\doiurl{10.1007/s10898-024-01450-9}
\end{botherref}
\endbibitem

\bibitem[\protect\citeauthoryear{Djelassi et~al.}{2021}]{Djelassi2021}
\begin{barticle}
\bauthor{\bsnm{Djelassi}, \binits{H.}},
\bauthor{\bsnm{Mitsos}, \binits{A.}},
\bauthor{\bsnm{Stein}, \binits{O.}}:
\batitle{Recent advances in nonconvex semi-infinite programming: Applications and algorithms}.
\bjtitle{EURO Journal on Computational Optimization}
\bvolume{9},
\bfpage{100006}
(\byear{2021})
\doiurl{10.1016/J.EJCO.2021.100006}
\end{barticle}
\endbibitem

\bibitem[\protect\citeauthoryear{Diehl et~al.}{2005}]{Diehl2005}
\begin{barticle}
\bauthor{\bsnm{Diehl}, \binits{M.}},
\bauthor{\bsnm{Bock}, \binits{H.G.}},
\bauthor{\bsnm{Kostina}, \binits{E.}}:
\batitle{An approximation technique for robust nonlinear optimization}.
\bjtitle{Mathematical Programming}
\bvolume{107},
\bfpage{213}--\blpage{230}
(\byear{2005})
\doiurl{10.1007/S10107-005-0685-1}
\end{barticle}
\endbibitem

\bibitem[\protect\citeauthoryear{Ben-Tal et~al.}{2015}]{BentalNRO}
\begin{barticle}
\bauthor{\bsnm{Ben-Tal}, \binits{A.}},
\bauthor{\bsnm{Hertog}, \binits{D.}},
\bauthor{\bsnm{Vial}, \binits{J.P.}}:
\batitle{Deriving robust counterparts of nonlinear uncertain inequalities}.
\bjtitle{Mathematical Programming}
\bvolume{149},
\bfpage{265}--\blpage{299}
(\byear{2015})
\doiurl{10.1007/s10107-014-0750-8}
\end{barticle}
\endbibitem

\bibitem[\protect\citeauthoryear{Zhang}{2007}]{Zhang2007}
\begin{barticle}
\bauthor{\bsnm{Zhang}, \binits{Y.}}:
\batitle{General robust-optimization formulation for nonlinear programming}.
\bjtitle{Journal of Optimization Theory and Applications}
\bvolume{132},
\bfpage{111}--\blpage{124}
(\byear{2007})
\doiurl{10.1007/S10957-006-9082-Z}
\end{barticle}
\endbibitem

\bibitem[\protect\citeauthoryear{Yuan et~al.}{2018}]{Li2018nro}
\begin{barticle}
\bauthor{\bsnm{Yuan}, \binits{Y.}},
\bauthor{\bsnm{Li}, \binits{Z.}},
\bauthor{\bsnm{Huang}, \binits{B.}}:
\batitle{Nonlinear robust optimization for process design}.
\bjtitle{AIChE Journal}
\bvolume{64},
\bfpage{481}--\blpage{494}
(\byear{2018})
\doiurl{10.1002/AIC.15950}
\end{barticle}
\endbibitem

\bibitem[\protect\citeauthoryear{Bertsimas et~al.}{2009}]{Bertsimas2009}
\begin{botherref}
\oauthor{\bsnm{Bertsimas}, \binits{D.}},
\oauthor{\bsnm{Nohadani}, \binits{O.}},
\oauthor{\bsnm{Teo}, \binits{K.M.}}:
Nonconvex robust optimization for problems with constraints.
INFORMS Journal on Computing,
1--15
(2009)
\doiurl{10.1287/ijoc.1090.0319}
\end{botherref}
\endbibitem

\bibitem[\protect\citeauthoryear{Houska and Diehl}{2012}]{Houska2012}
\begin{barticle}
\bauthor{\bsnm{Houska}, \binits{B.}},
\bauthor{\bsnm{Diehl}, \binits{M.}}:
\batitle{Nonlinear robust optimization via sequential convex bilevel programming}.
\bjtitle{Mathematical Programming}
\bvolume{142},
\bfpage{539}--\blpage{577}
(\byear{2012})
\doiurl{10.1007/S10107-012-0591-2}
\end{barticle}
\endbibitem

\bibitem[\protect\citeauthoryear{Lasserre}{2006}]{Lasserre2006}
\begin{barticle}
\bauthor{\bsnm{Lasserre}, \binits{J.B.}}:
\batitle{Robust global optimization with polynomials}.
\bjtitle{Mathematical Programming}
\bvolume{107},
\bfpage{275}--\blpage{293}
(\byear{2006})
\doiurl{10.1007/S10107-005-0687-Z}
\end{barticle}
\endbibitem

\bibitem[\protect\citeauthoryear{Lasserre}{2011}]{Lasserre2011}
\begin{barticle}
\bauthor{\bsnm{Lasserre}, \binits{J.B.}}:
\batitle{Min-max and robust polynomial optimization}.
\bjtitle{Journal of Global Optimization}
\bvolume{51},
\bfpage{1}--\blpage{10}
(\byear{2011})
\doiurl{10.1007/S10898-010-9628-3}
\end{barticle}
\endbibitem

\bibitem[\protect\citeauthoryear{Li et~al.}{2011}]{Li2011_gro}
\begin{barticle}
\bauthor{\bsnm{Li}, \binits{J.}},
\bauthor{\bsnm{Misener}, \binits{R.}},
\bauthor{\bsnm{Floudas}, \binits{C.A.}}:
\batitle{Scheduling of crude oil operations under demand uncertainty: A robust optimization framework coupled with global optimization}.
\bjtitle{AIChE Journal}
\bvolume{58},
\bfpage{2373}--\blpage{2396}
(\byear{2011})
\doiurl{10.1002/aic.12772}
\end{barticle}
\endbibitem

\bibitem[\protect\citeauthoryear{Wiebe et~al.}{2019}]{Wiebe2019}
\begin{barticle}
\bauthor{\bsnm{Wiebe}, \binits{J.}},
\bauthor{\bsnm{Cecĺlio}, \binits{I.}},
\bauthor{\bsnm{Misener}, \binits{R.}}:
\batitle{Robust optimization for the pooling problem}.
\bjtitle{Industrial and Engineering Chemistry Research}
\bvolume{58},
\bfpage{12712}--\blpage{12722}
(\byear{2019})
\doiurl{10.1021/acs.iecr.9b01772}
\end{barticle}
\endbibitem

\bibitem[\protect\citeauthoryear{Zhang et~al.}{2021}]{Zhang2021}
\begin{barticle}
\bauthor{\bsnm{Zhang}, \binits{L.}},
\bauthor{\bsnm{Yuan}, \binits{Z.}},
\bauthor{\bsnm{Chen}, \binits{B.}}:
\batitle{Refinery-wide planning operations under uncertainty via robust optimization approach coupled with global optimization}.
\bjtitle{Computers \& Chemical Engineering}
\bvolume{146},
\bfpage{107205}
(\byear{2021})
\doiurl{10.1016/J.COMPCHEMENG.2020.107205}
\end{barticle}
\endbibitem

\bibitem[\protect\citeauthoryear{Carrizosa and Messine}{2021}]{Carrizosa2021}
\begin{barticle}
\bauthor{\bsnm{Carrizosa}, \binits{E.}},
\bauthor{\bsnm{Messine}, \binits{F.}}:
\batitle{An interval branch and bound method for global robust optimization}.
\bjtitle{Journal of Global Optimization}
\bvolume{80},
\bfpage{507}--\blpage{522}
(\byear{2021})
\doiurl{10.1007/s10898-021-01010-5}
\end{barticle}
\endbibitem

\bibitem[\protect\citeauthoryear{Kuchlbauer et~al.}{2022}]{Kuchlbauer2022}
\begin{barticle}
\bauthor{\bsnm{Kuchlbauer}, \binits{M.}},
\bauthor{\bsnm{Liers}, \binits{F.}},
\bauthor{\bsnm{Stingl}, \binits{M.}}:
\batitle{Outer approximation for mixed-integer nonlinear robust optimization}.
\bjtitle{Journal of Optimization Theory and Applications}
\bvolume{195},
\bfpage{1056}--\blpage{1086}
(\byear{2022})
\doiurl{10.1007/s10957-022-02114-y}
\end{barticle}
\endbibitem

\bibitem[\protect\citeauthoryear{Alfaki and Haugland}{2013}]{Alfaki2013}
\begin{barticle}
\bauthor{\bsnm{Alfaki}, \binits{M.}},
\bauthor{\bsnm{Haugland}, \binits{D.}}:
\batitle{Strong formulations for the pooling problem}.
\bjtitle{Journal of Global Optimization}
\bvolume{56},
\bfpage{897}--\blpage{916}
(\byear{2013})
\doiurl{10.1007/S10898-012-9875-6}
\end{barticle}
\endbibitem

\bibitem[\protect\citeauthoryear{Tuy}{1998}]{Tuy1998}
\begin{bbook}
\bauthor{\bsnm{Tuy}, \binits{H.}}:
\bbtitle{Convex Analysis}.
\bpublisher{Springer},
\blocation{Boston, MA}
(\byear{1998}).
\doiurl{10.1007/978-1-4757-2809-5}
\end{bbook}
\endbibitem

\bibitem[\protect\citeauthoryear{Foulds et~al.}{1992}]{Foulds1992}
\begin{barticle}
\bauthor{\bsnm{Foulds}, \binits{L.R.}},
\bauthor{\bsnm{Haugland}, \binits{D.}},
\bauthor{\bsnm{Jörnsten}, \binits{K.}}:
\batitle{A bilinear approach to the pooling problem†}.
\bjtitle{Optimization}
\bvolume{24},
\bfpage{165}--\blpage{180}
(\byear{1992})
\doiurl{10.1080/02331939208843786}
\end{barticle}
\endbibitem

\bibitem[\protect\citeauthoryear{Quesada and Grossmann}{1995}]{Quesada1995}
\begin{barticle}
\bauthor{\bsnm{Quesada}, \binits{I.}},
\bauthor{\bsnm{Grossmann}, \binits{I.E.}}:
\batitle{A global optimization algorithm for linear fractional and bilinear programs}.
\bjtitle{Journal of Global Optimization}
\bvolume{6},
\bfpage{39}--\blpage{76}
(\byear{1995})
\doiurl{10.1007/BF01106605}
\end{barticle}
\endbibitem

\bibitem[\protect\citeauthoryear{McCormick}{1976}]{McCormick1976}
\begin{barticle}
\bauthor{\bsnm{McCormick}, \binits{G.P.}}:
\batitle{Computability of global solutions to factorable nonconvex programs: Part i — convex underestimating problems}.
\bjtitle{Mathematical Programming}
\bvolume{10},
\bfpage{147}--\blpage{175}
(\byear{1976})
\doiurl{10.1007/BF01580665}
\end{barticle}
\endbibitem

\bibitem[\protect\citeauthoryear{Sherali and Alameddine}{1992}]{Sherali1992}
\begin{barticle}
\bauthor{\bsnm{Sherali}, \binits{H.D.}},
\bauthor{\bsnm{Alameddine}, \binits{A.}}:
\batitle{A new reformulation-linearization technique for bilinear programming problems}.
\bjtitle{Journal of Global Optimization}
\bvolume{2},
\bfpage{379}--\blpage{410}
(\byear{1992})
\doiurl{10.1007/BF00122429}
\end{barticle}
\endbibitem

\bibitem[\protect\citeauthoryear{Audet et~al.}{2004}]{Audet2004}
\begin{barticle}
\bauthor{\bsnm{Audet}, \binits{C.}},
\bauthor{\bsnm{Brimberg}, \binits{J.}},
\bauthor{\bsnm{Hansen}, \binits{P.}},
\bauthor{\bsnm{Digabel}, \binits{S.L.}},
\bauthor{\bsnm{Mladenović}, \binits{N.}}:
\batitle{Pooling problem: Alternate formulations and solution methods}.
\bjtitle{Management Science}
\bvolume{50},
\bfpage{761}--\blpage{776}
(\byear{2004})
\doiurl{10.1287/MNSC.1030.0207}
\end{barticle}
\endbibitem

\bibitem[\protect\citeauthoryear{Gounaris et~al.}{2009}]{Gounaris2009}
\begin{barticle}
\bauthor{\bsnm{Gounaris}, \binits{C.E.}},
\bauthor{\bsnm{Misener}, \binits{R.}},
\bauthor{\bsnm{Floudas}, \binits{C.A.}}:
\batitle{Computational comparison of piecewise-linear relaxations for pooling problems}.
\bjtitle{Industrial and Engineering Chemistry Research}
\bvolume{48},
\bfpage{5742}--\blpage{5766}
(\byear{2009})
\doiurl{10.1021/ie8016048}
\end{barticle}
\endbibitem

\bibitem[\protect\citeauthoryear{Dey and Gupte}{2015}]{Dey2015}
\begin{barticle}
\bauthor{\bsnm{Dey}, \binits{S.S.}},
\bauthor{\bsnm{Gupte}, \binits{A.}}:
\batitle{Analysis of milp techniques for the pooling problem}.
\bjtitle{Operations Research}
\bvolume{63},
\bfpage{412}--\blpage{427}
(\byear{2015})
\doiurl{10.1287/OPRE.2015.1357}
\end{barticle}
\endbibitem

\bibitem[\protect\citeauthoryear{Ceccon and Misener}{2022}]{Ceccon2022}
\begin{barticle}
\bauthor{\bsnm{Ceccon}, \binits{F.}},
\bauthor{\bsnm{Misener}, \binits{R.}}:
\batitle{Solving the pooling problem at scale with extensible solver galini}.
\bjtitle{Computers \& Chemical Engineering}
\bvolume{159},
\bfpage{107660}
(\byear{2022})
\doiurl{10.1016/J.COMPCHEMENG.2022.107660}
\end{barticle}
\endbibitem

\bibitem[\protect\citeauthoryear{Chen et~al.}{2024}]{Chen2024}
\begin{barticle}
\bauthor{\bsnm{Chen}, \binits{Y.}},
\bauthor{\bsnm{Maravelias}, \binits{C.T.}},
\bauthor{\bsnm{Zhang}, \binits{X.}}:
\batitle{Tightening discretization-based milp models for the pooling problem using upper bounds on bilinear terms}.
\bjtitle{Optimization Letters}
\bvolume{18},
\bfpage{215}--\blpage{234}
(\byear{2024})
\doiurl{10.1007/S11590-023-01985-Y}
\end{barticle}
\endbibitem

\bibitem[\protect\citeauthoryear{Belotti et~al.}{2013}]{Belotti2013}
\begin{barticle}
\bauthor{\bsnm{Belotti}, \binits{P.}},
\bauthor{\bsnm{Kirches}, \binits{C.}},
\bauthor{\bsnm{Leyffer}, \binits{S.}},
\bauthor{\bsnm{Linderoth}, \binits{J.}},
\bauthor{\bsnm{Luedtke}, \binits{J.}},
\bauthor{\bsnm{Mahajan}, \binits{A.}}:
\batitle{Mixed-integer nonlinear optimization}.
\bjtitle{Acta Numerica}
\bvolume{22},
\bfpage{1}--\blpage{131}
(\byear{2013})
\doiurl{10.1017/S0962492913000032}
\end{barticle}
\endbibitem

\bibitem[\protect\citeauthoryear{Stein}{2024}]{Stein2024}
\begin{bbook}
\bauthor{\bsnm{Stein}, \binits{O.}}:
\bbtitle{Basic Concepts of Global Optimization}
vol. \bseriesno{5}.
\bpublisher{Springer}, \blocation{???}
(\byear{2024}).
\doiurl{10.1007/978-3-662-66240-3} .
\burl{https://link.springer.com/10.1007/978-3-662-66240-3}
\end{bbook}
\endbibitem

\bibitem[\protect\citeauthoryear{Ben-Tal et~al.}{2009}]{Bental09}
\begin{bbook}
\bauthor{\bsnm{Ben-Tal}, \binits{A.}},
\bauthor{\bsnm{Nemirovski}, \binits{A.}},
\bauthor{\bsnm{Ghaoui}, \binits{L.E.}}:
\bbtitle{Robust Optimization}.
\bpublisher{Princeton University Press},
\blocation{Princeton, US}
(\byear{2009})
\end{bbook}
\endbibitem

\bibitem[\protect\citeauthoryear{Bertsimas and Sim}{2004}]{Bertsimas04}
\begin{barticle}
\bauthor{\bsnm{Bertsimas}, \binits{D.}},
\bauthor{\bsnm{Sim}, \binits{M.}}:
\batitle{The price of robustness}.
\bjtitle{Operations Research}
\bvolume{52},
\bfpage{35}--\blpage{53}
(\byear{2004})
\doiurl{10.1287/OPRE.1030.0065}
\end{barticle}
\endbibitem

\bibitem[\protect\citeauthoryear{Mitsos et~al.}{2009}]{Mitsos2009}
\begin{barticle}
\bauthor{\bsnm{Mitsos}, \binits{A.}},
\bauthor{\bsnm{Chachuat}, \binits{B.}},
\bauthor{\bsnm{Barton}, \binits{P.I.}}:
\batitle{Mccormick-based relaxations of algorithms}.
\bjtitle{SIAM Journal on Optimization}
\bvolume{20},
\bfpage{573}--\blpage{601}
(\byear{2009})
\doiurl{10.1137/080717341}
\end{barticle}
\endbibitem

\bibitem[\protect\citeauthoryear{Harwood et~al.}{2021}]{Harwood2021}
\begin{barticle}
\bauthor{\bsnm{Harwood}, \binits{S.M.}},
\bauthor{\bsnm{Papageorgiou}, \binits{D.J.}},
\bauthor{\bsnm{Trespalacios}, \binits{F.}}:
\batitle{A note on semi-infinite program bounding methods}.
\bjtitle{Optimization Letters}
\bvolume{15},
\bfpage{1485}--\blpage{1490}
(\byear{2021})
\doiurl{10.1007/S11590-020-01638-4}
\end{barticle}
\endbibitem

\bibitem[\protect\citeauthoryear{Bompadre and Mitsos}{2012}]{Bompadre2012}
\begin{barticle}
\bauthor{\bsnm{Bompadre}, \binits{A.}},
\bauthor{\bsnm{Mitsos}, \binits{A.}}:
\batitle{Convergence rate of mccormick relaxations}.
\bjtitle{Journal of Global Optimization}
\bvolume{52},
\bfpage{1}--\blpage{28}
(\byear{2012})
\doiurl{10.1007/s10898-011-9685-2}
\end{barticle}
\endbibitem

\bibitem[\protect\citeauthoryear{Scott et~al.}{2011}]{Scott2011}
\begin{barticle}
\bauthor{\bsnm{Scott}, \binits{J.K.}},
\bauthor{\bsnm{Stuber}, \binits{M.D.}},
\bauthor{\bsnm{Barton}, \binits{P.I.}}:
\batitle{Generalized mccormick relaxations}.
\bjtitle{Journal of Global Optimization}
\bvolume{51},
\bfpage{569}--\blpage{606}
(\byear{2011})
\doiurl{10.1007/S10898-011-9664-7}
\end{barticle}
\endbibitem

\bibitem[\protect\citeauthoryear{Horst and Tuy}{1996}]{Horst1996}
\begin{bbook}
\bauthor{\bsnm{Horst}, \binits{R.}},
\bauthor{\bsnm{Tuy}, \binits{H.}}:
\bbtitle{Global Optimization}.
\bpublisher{Springer},
\blocation{Berlin Heidelberg}
(\byear{1996}).
\doiurl{10.1007/978-3-662-03199-5}
\end{bbook}
\endbibitem

\bibitem[\protect\citeauthoryear{Locatelli and Schoen}{2013}]{Locatelli2013}
\begin{bbook}
\bauthor{\bsnm{Locatelli}, \binits{M.}},
\bauthor{\bsnm{Schoen}, \binits{F.}}:
\bbtitle{Global Optimization:Theory, Algorithms, and Applications}.
\bpublisher{Society for Industrial and Applied Mathematics},
\blocation{Philadelphia, United States}
(\byear{2013}).
\doiurl{10.1137/1.9781611972672}
\end{bbook}
\endbibitem

\bibitem[\protect\citeauthoryear{Smith and Pantelides}{1999}]{Smith1999}
\begin{barticle}
\bauthor{\bsnm{Smith}, \binits{E.M.B.}},
\bauthor{\bsnm{Pantelides}, \binits{C.C.}}:
\batitle{A symbolic reformulation/spatial branch-and-bound algorithm for the global optimisation of nonconvex minlps}.
\bjtitle{Computers \& Chemical Engineering}
\bvolume{23},
\bfpage{457}--\blpage{478}
(\byear{1999})
\doiurl{10.1016/S0098-1354(98)00286-5}
\end{barticle}
\endbibitem

\bibitem[\protect\citeauthoryear{Tawarmalani and Sahinidis}{2004}]{Tawarmalani2004}
\begin{barticle}
\bauthor{\bsnm{Tawarmalani}, \binits{M.}},
\bauthor{\bsnm{Sahinidis}, \binits{N.V.}}:
\batitle{Global optimization of mixed-integer nonlinear programs: A theoretical and computational study}.
\bjtitle{Mathematical Programming}
\bvolume{99},
\bfpage{563}--\blpage{591}
(\byear{2004})
\doiurl{10.1007/S10107-003-0467-6}
\end{barticle}
\endbibitem

\bibitem[\protect\citeauthoryear{Belotti et~al.}{2009}]{Belotti2009}
\begin{barticle}
\bauthor{\bsnm{Belotti}, \binits{P.}},
\bauthor{\bsnm{Lee}, \binits{J.}},
\bauthor{\bsnm{Liberti}, \binits{L.}},
\bauthor{\bsnm{Margot}, \binits{F.}},
\bauthor{\bsnm{Wächter}, \binits{A.}}:
\batitle{Branching and bounds tighteningtechniques for non-convex minlp}.
\bjtitle{Optimization Methods and Software}
\bvolume{24},
\bfpage{597}--\blpage{634}
(\byear{2009})
\doiurl{10.1080/10556780903087124}
\end{barticle}
\endbibitem

\bibitem[\protect\citeauthoryear{Achterberg et~al.}{2005}]{Achterberg2005}
\begin{barticle}
\bauthor{\bsnm{Achterberg}, \binits{T.}},
\bauthor{\bsnm{Koch}, \binits{T.}},
\bauthor{\bsnm{Martin}, \binits{A.}}:
\batitle{Branching rules revisited}.
\bjtitle{Operations Research Letters}
\bvolume{33},
\bfpage{42}--\blpage{54}
(\byear{2005})
\doiurl{10.1016/J.ORL.2004.04.002}
\end{barticle}
\endbibitem

\bibitem[\protect\citeauthoryear{Sahinidis and Tawarmalani}{2005}]{Sahinidis2005}
\begin{barticle}
\bauthor{\bsnm{Sahinidis}, \binits{N.V.}},
\bauthor{\bsnm{Tawarmalani}, \binits{M.}}:
\batitle{Accelerating branch-and-bound through a modeling language construct for relaxation-specific constraints}.
\bjtitle{Journal of Global Optimization}
\bvolume{32},
\bfpage{259}--\blpage{280}
(\byear{2005})
\doiurl{10.1007/S10898-004-2705-8}
\end{barticle}
\endbibitem

\bibitem[\protect\citeauthoryear{Ben-Tal et~al.}{1994}]{Bental94}
\begin{barticle}
\bauthor{\bsnm{Ben-Tal}, \binits{A.}},
\bauthor{\bsnm{Eiger}, \binits{G.}},
\bauthor{\bsnm{Gershovitz}, \binits{V.}}:
\batitle{Global minimization by reducing the duality gap}.
\bjtitle{Mathematical Programming}
\bvolume{63},
\bfpage{193}--\blpage{212}
(\byear{1994})
\doiurl{10.1007/BF01582066}
\end{barticle}
\endbibitem

\bibitem[\protect\citeauthoryear{Ben-Tal and Nemirovski}{2002}]{Bental02RO}
\begin{barticle}
\bauthor{\bsnm{Ben-Tal}, \binits{A.}},
\bauthor{\bsnm{Nemirovski}, \binits{A.}}:
\batitle{Robust optimization - methodology and applications}.
\bjtitle{Mathematical Programming}
\bvolume{92},
\bfpage{453}--\blpage{480}
(\byear{2002})
\doiurl{10.1007/S101070100286}
\end{barticle}
\endbibitem

\bibitem[\protect\citeauthoryear{Soyster}{1973}]{Soyster73}
\begin{barticle}
\bauthor{\bsnm{Soyster}, \binits{A.L.}}:
\batitle{Convex programming with set-inclusive constraints and applications to inexact linear programming}.
\bjtitle{Operations Research}
\bvolume{21},
\bfpage{1154}--\blpage{1157}
(\byear{1973})
\doiurl{10.1287/OPRE.21.5.1154}
\end{barticle}
\endbibitem

\bibitem[\protect\citeauthoryear{Ben-Tal and Nemirovski}{2000}]{Bental00}
\begin{barticle}
\bauthor{\bsnm{Ben-Tal}, \binits{A.}},
\bauthor{\bsnm{Nemirovski}, \binits{A.}}:
\batitle{Robust solutions of linear programming problems contaminated with uncertain data}.
\bjtitle{Mathematical Programming}
\bvolume{88},
\bfpage{411}--\blpage{424}
(\byear{2000})
\doiurl{10.1007/PL00011380}
\end{barticle}
\endbibitem

\bibitem[\protect\citeauthoryear{Bynum et~al.}{2021}]{Pyomo}
\begin{bbook}
\bauthor{\bsnm{Bynum}, \binits{M.L.}},
\bauthor{\bsnm{Hackebeil}, \binits{G.A.}},
\bauthor{\bsnm{Hart}, \binits{W.E.}},
\bauthor{\bsnm{Laird}, \binits{C.D.}},
\bauthor{\bsnm{Nicholson}, \binits{B.L.}},
\bauthor{\bsnm{Siirola}, \binits{J.D.}},
\bauthor{\bsnm{Watson}, \binits{J.-P.}},
\bauthor{\bsnm{Woodruff}, \binits{D.L.}}:
\bbtitle{Pyomo--optimization Modeling in Python}
vol. \bseriesno{67},
\bedition{3}rd edn.
\bpublisher{Springer},
\blocation{US}
(\byear{2021})
\end{bbook}
\endbibitem

\bibitem[\protect\citeauthoryear{Huangfu and Hall}{2018}]{highs}
\begin{barticle}
\bauthor{\bsnm{Huangfu}, \binits{Q.}},
\bauthor{\bsnm{Hall}, \binits{J.A.J.}}:
\batitle{Parallelizing the dual revised simplex method}.
\bjtitle{Mathematical Programming Computation}
\bvolume{10},
\bfpage{119}--\blpage{142}
(\byear{2018})
\doiurl{10.1007/S12532-017-0130-5}
\end{barticle}
\endbibitem

\bibitem[\protect\citeauthoryear{Drud}{1994}]{Drud1994}
\begin{barticle}
\bauthor{\bsnm{Drud}, \binits{A.S.}}:
\batitle{Conopt—a large-scale grg code}.
\bjtitle{INFORMS Journal on Computing}
\bvolume{6},
\bfpage{207}--\blpage{216}
(\byear{1994})
\doiurl{10.1287/IJOC.6.2.207}
\end{barticle}
\endbibitem

\bibitem[\protect\citeauthoryear{{Gurobi Optimization, LLC}}{2025}]{gurobi}
\begin{botherref}
\oauthor{\bsnm{{Gurobi Optimization, LLC}}}:
{Gurobi Optimizer Reference Manual}
(2025).
\url{https://www.gurobi.com}
\end{botherref}
\endbibitem

\bibitem[\protect\citeauthoryear{Haverly}{1978}]{Haverly1978}
\begin{botherref}
\oauthor{\bsnm{Haverly}, \binits{C.A.}}:
Studies of the behavior of recursion for the pooling problem.
ACM SIGMAP Bulletin,
19--28
(1978)
\doiurl{10.1145/1111237.1111238}
\end{botherref}
\endbibitem

\bibitem[\protect\citeauthoryear{Adhya et~al.}{1999}]{Adhya1999}
\begin{barticle}
\bauthor{\bsnm{Adhya}, \binits{N.}},
\bauthor{\bsnm{Tawarmalani}, \binits{M.}},
\bauthor{\bsnm{Sahinidis}, \binits{N.V.}}:
\batitle{A lagrangian approach to the pooling problem}.
\bjtitle{Industrial and Engineering Chemistry Research}
\bvolume{38},
\bfpage{1956}--\blpage{1972}
(\byear{1999})
\doiurl{10.1021/ie980666q}
\end{barticle}
\endbibitem

\bibitem[\protect\citeauthoryear{Mínguez and Casero-Alonso}{2019}]{casero}
\begin{barticle}
\bauthor{\bsnm{Mínguez}, \binits{R.}},
\bauthor{\bsnm{Casero-Alonso}, \binits{V.}}:
\batitle{On the convergence of cutting-plane methods for robust optimization with ellipsoidal uncertainty sets}.
\bjtitle{arXiv e-prints}
(\byear{2019})
\doiurl{https://arxiv.org/abs/1904.01244v1}
\end{barticle}
\endbibitem

\end{thebibliography}

\end{document}